\newtheorem{theorem}{Theorem}[section]
\newtheorem{lemma}[theorem]{Lemma}
\newtheorem{definition}[theorem]{Definition}
\newtheorem{corollary}[theorem]{Corollary}
\newtheorem{proposition}[theorem]{Proposition}
\newtheorem{example}[theorem]{Example}
\newtheorem{remark}[theorem]{Remark}
\newtheorem{problem}[theorem]{Problem}
\newtheorem{asp}[theorem]{Assumption}
\newenvironment{proof}[1][Proof]{\noindent\textbf{#1.} }{\hfill\rule{1mm}{2mm}}
\makeatletter \@addtoreset{equation}{section} \makeatother
\begin{document}

\title
{On restricted edge-connectivity of replacement product
graphs\footnote{This paper has been accepted for publication in
SCIENCE CHINA Mathematics. }}

\author
{HONG ZhenMu \\
{\small School of Finance}\\
{\small Anhui University of Finance \& Economics}\\
{\small Bengbu, Anhui, 233030, China}\\
{\small Email: zmhong@mail.ustc.edu.cn}\\
\\
XU JunMing\thanks{Corresponding author}\\
{\small School of Mathematical Sciences}\\
{\small University of Science and Technology of China}\\
{\small Hefei, Anhui, 230026, China}\\
{\small Email: xujm@ustc.edu.cn}\\}

\date{}
\maketitle

\begin{center}
\begin{minipage}{140mm}

\begin{center} {\bf Abstract} \end{center}

This paper considers the edge-connectivity and the restricted
edge-connectivity of replacement product graphs, gives some bounds
on edge-connectivity and restricted edge-connectivity of replacement
product graphs and determines the exact values for some special
graphs. In particular, the authors further confirm that under
certain conditions, the replacement product of two Cayley graphs is
also a Cayley graph, and give a necessary and sufficient condition
for such Cayley graphs to have maximum restricted edge-connectivity.
Based on these results, the authors construct a Cayley graph with
degree $d$ whose restricted edge-connectivity is equal to $d+s$ for
given odd integer $d$ and integer $s$ with $d \geqslant 5$ and
$1\leqslant s\leqslant d-3$, which answers a problem proposed ten
years ago.

\vskip16pt

\indent{\bf Keywords:} Graph theory, Connectivity, restricted
edge-connectivity, replacement product, Cayley graph

\vskip0.4cm \noindent {\bf AMS Subject Classification: }\ 05C40\quad
68M15\quad 68R10

\end{minipage}
\end{center}

\newpage

\section{Introduction}

Throughout this paper, we follow~\cite{x03} for graph-theoretical
terminology and notation not defined here. Specially, $G=(V, E)$ is
a simple connected undirected graph, where $V=V(G)$ is the
vertex-set of $G$ and $E=E(G)$ is the edge-set of $G$; $d_G(x)$ is
the degree of a vertex $x$ in $G$, the number of edges incident with
$x$ in $G$; $\delta(G)=\min\{d_G(x): x\in V(G)\}$ is the minimum
degree of $G$; $\xi(G)=\min\{d_G(x)+d_G(y)-2: xy\in E(G)\}$ is the
minimum edge-degree of $G$.

The connectivity $\kappa(G)$ (resp. edge-connectivity $\lambda(G)$)
of $G$ is defined as the minimum number of vertices (resp. edges)
whose removal results in disconnected. The well-known Whitney
inequality states that $\kappa (G)\leqslant \lambda (G)\leqslant
\delta (G)$ for any graph $G$. In this paper, we are interested in
the edge-connectivity $\lambda(G)$.

It is well known that when the underlying topology of an
interconnection network is modeled by a connected graph $G=(V,E)$,
where $V$ is the set of processors and $E$ is the set of
communication links in the network, the edge-connectivity
$\lambda(G)$ of $G$ is an important measurement for reliability and
fault tolerance of the network since the larger $\lambda(G)$ is, the
more reliable the network is. However, when computing $\lambda(G)$,
one implicitly assumes that all edges incident with the same vertex
may fail simultaneously. Consequently, this measurement is
inaccurate for large-scale processing systems in which some subsets
of system links can not fail at the same time in real applications.

To overcome the shortcomings of edge-connectivity, Esfahanian and
Hakimi~\cite{eh88} proposed the concept of the restricted
edge-connectivity $\lambda'(G)$ of a graph $G$, which is the minimum
number of edges whose removal results in disconnected and no
isolated vertices, and gave the following result.

\begin{theorem}[{See~\cite{eh88}}]\label{thm1.1}\quad
$\lambda(G)\leqslant \lambda'(G)\leqslant \xi(G)$ for any graph $G$
of order $n (\geqslant 4)$ except for a star $K_{1,n-1}$.
\end{theorem}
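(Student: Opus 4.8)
\medskip
\noindent\textbf{Proof proposal.}
The inequality $\lambda(G)\leqslant\lambda'(G)$ is essentially free, since every restricted edge cut is in particular an edge cut; thus the whole task is to exhibit a restricted edge cut of size at most $\xi(G)$, which at the same time shows that restricted edge cuts of $G$ exist (so that $\lambda'(G)$ is well defined). The plan is to begin from an edge $xy\in E(G)$ realizing the minimum edge-degree, $d_G(x)+d_G(y)-2=\xi(G)$, and to let $S$ consist of all edges incident with $x$ or $y$ other than $xy$ itself, so that $|S|=d_G(x)+d_G(y)-2=\xi(G)$. In $G-S$ neither $x$ nor $y$ has a neighbour outside $\{x,y\}$, so $\{x,y\}$ together with the edge $xy$ forms a connected component; since $n\geqslant4$, this alone makes $G-S$ disconnected. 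Hence $S$ is a restricted edge cut unless $G-S$ has an isolated vertex, i.e.\ unless some vertex $w\notin\{x,y\}$ has $N_G(w)\subseteq\{x,y\}$. If no such $w$ exists we are done; the remaining content is to dispose of that obstruction, and this is exactly where the exclusion of $K_{1,n-1}$ enters.

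So suppose such a $w$ exists. If $d_G(w)=2$, then $N_G(w)=\{x,y\}$; since $\xi(G)\leqslant d_G(w)+d_G(x)-2=d_G(x)$ and likewise $\xi(G)\leqslant d_G(y)$, while $\xi(G)=d_G(x)+d_G(y)-2$, we obtain $d_G(x),d_G(y)\leqslant2$, and as both are at least $2$ (each being adjacent to $w$) we get $d_G(x)=d_G(y)=2$, so that $\{w,x,y\}$ is a whole component of $G$, contradicting connectedness together with $n\geqslant4$. Therefore $d_G(w)=1$; say $w$ is adjacent to $x$. Then $\xi(G)\leqslant d_G(w)+d_G(x)-2=d_G(x)-1$, which combined with $\xi(G)=d_G(x)+d_G(y)-2$ forces $d_G(y)=1$ and $\xi(G)=d_G(x)-1$. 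Now let $P$ be the set of pendant neighbours of $x$, so $w,y\in P$ and $p:=|P|\geqslant1$. If $V(G)=\{x\}\cup P$ then $G=K_{1,n-1}$, which is excluded; otherwise $U:=V(G)\setminus(\{x\}\cup P)$ is nonempty, the only edges joining $\{x\}\cup P$ to $U$ are the $k:=d_G(x)-p$ edges from $x$ to $U$ (and $k\geqslant1$ by connectedness), and deleting these $k$ edges disconnects $G$ without isolating any vertex: $x$ keeps its $p\geqslant1$ edges to $P$, and a vertex of $U$ would be isolated only if its unique neighbour were $x$, i.e.\ only if it lay in $P$. Hence this edge set is a restricted edge cut of size $k=d_G(x)-p\leqslant d_G(x)-1=\xi(G)$, and so $\lambda'(G)\leqslant\xi(G)$.

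The only genuinely delicate point is this last case: the natural cut $S$ around a minimum-edge-degree edge can fail precisely when $x$ carries several pendant neighbours, and then one must switch to the cut around the entire pendant star at $x$ and invoke $G\neq K_{1,n-1}$ to be sure there is still something for that cut to separate off. With these pieces assembled, the chain $\lambda(G)\leqslant\lambda'(G)\leqslant\xi(G)$ follows.
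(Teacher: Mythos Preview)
Your argument is correct. Note, however, that the paper does not supply its own proof of Theorem~\ref{thm1.1}; it simply quotes the result from Esfahanian and Hakimi~\cite{eh88}, so there is no in-paper proof to compare against. What you have written is essentially the classical Esfahanian--Hakimi argument: cut around an edge $xy$ of minimum edge-degree, and when that cut happens to isolate a vertex, analyze the resulting pendant structure at $x$, which is exactly where the exclusion of $K_{1,n-1}$ becomes relevant. One small remark: in your final case you record $p\geqslant 1$, but in fact $p\geqslant 2$ since both $w$ and $y$ are pendant neighbours of $x$; this is harmless, as $p\geqslant 1$ already suffices for the bound $k\leqslant d_G(x)-1=\xi(G)$.
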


A graph $G$ is vertex-transitive if for any two vertices $x$ and $y$
in $G$, there is a $\sigma\in$ Aut\,$(G)$ such that $y=\sigma(x)$,
where Aut\,$(G)$ is the automorphism group of $G$. Clearly,
$\xi(G)=2d-2$ for a vertex-transitive connected graph $G$ with
degree $d$. Xu {\it et al.} obtained the following results.

\begin{theorem}[{See~\cite{xx02}}]\label{thm1.2}\quad
Let $G$ be a vertex-transitive connected graph with order $n\
(\geqslant 4)$ and degree $d\ (\geqslant 2)$. Then

$(a)$\ $\lambda'(G)=\xi(G)=2d-2$ if $n$ is odd or $G$ contains no
triangles;

$(b)$\ there exists an integer $m\ (\geqslant 2)$ such that
$d\leqslant \lambda'(G) =\frac nm\leqslant 2d-3$ otherwise.
\end{theorem}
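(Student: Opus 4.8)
The plan is to study a $\lambda'$-atom of $G$, following the ``atom method''. Recall that a $\lambda'$-fragment is a set $X\subseteq V(G)$ with $2\le|X|\le n/2$, $\delta(G[X])\ge1$, $\delta(G[\bar X])\ge1$ (where $\bar X=V(G)\setminus X$) and $|[X,\bar X]|=\lambda'(G)$; a $\lambda'$-atom is a $\lambda'$-fragment of smallest cardinality. Fix such an atom $X$ and put $r=|X|$. Since $G$ is $d$-regular, $\lambda'(G)=dr-2|E(G[X])|$, and since $G$ is vertex-transitive, $\xi(G)=2d-2$, so Theorem~\ref{thm1.1} already gives $\lambda'(G)\le 2d-2$; the real work is to prove $\lambda'(G)\ge d$ and to decide exactly when $2d-2$ is attained.

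First I would pin down the structure of the atom. (1)~$G[X]$ is connected: otherwise a component $X_1$ of $G[X]$ — which has $|X_1|\ge 2$ because $\delta(G[X])\ge1$ — would itself be a $\lambda'$-fragment of smaller order. (2)~Distinct $\lambda'$-atoms are disjoint: the usual submodularity-plus-minimality argument, in which for two properly overlapping atoms $X,Y$ one shows $X\cap Y$ or $X\cup Y$ is again a $\lambda'$-fragment of smaller order, the possibility $\bar X\cap\bar Y=\emptyset$ being excluded since it would force $|X|,|Y|>n/2$. As $\mathrm{Aut}(G)$ is vertex-transitive, the atoms then partition $V(G)$ into $m:=n/r$ blocks, so $r\mid n$ and $m\ge2$. (3)~$G[X]$ is vertex-transitive: the setwise stabilizer of the block $X$ in $\mathrm{Aut}(G)$ acts transitively on $X$. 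Hence $G[X]$ is $d'$-regular for some $d'$ with $1\le d'\le d-1$ (the upper bound since $G$ is connected and $X\ne V(G)$), each vertex of $X$ has exactly $d-d'$ neighbours outside $X$, and $\lambda'(G)=(d-d')\,r$.

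For the lower bound, split on $r$ versus $d$: if $r\ge d$ then $\lambda'(G)=(d-d')r\ge r\ge d$; if $r\le d-1$ then $d'\le r-1$ gives $d-d'\ge d-r+1$, and $(d-r+1)r-d=(r-1)(d-r)\ge0$. If $r=2$ then $G[X]=K_2$ and $\lambda'(G)=2d-2$, so assume $r\ge3$, whence $G[X]$ is a connected vertex-transitive graph on at least three vertices and $d'\ge2$. For case~(a): if $n$ is odd then $r$ is odd and $(d-d')n$, being twice the number of edges between distinct blocks, is even, so $d-d'$ is even, hence $\ge2$; combined with $r\ge d'+1$ and $(d-d')r\le2d-2$, a short computation using $d'\ge2$ forces $d-d'=2$, $r=d-1$ and $\lambda'(G)=2d-2$. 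If instead $G$ is triangle-free, then so is $G[X]$, so Tur\'{a}n's theorem gives $|E(G[X])|\le r^2/4$, i.e.\ $d'\le r/2$; substituting into $(d-d')r\le2d-2$ forces $r\ge2d-2$, and then $\lambda'(G)=(d-d')r\ge r\ge2d-2$. In either subcase $\lambda'(G)=2d-2=\xi(G)$, which is~(a).

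In case~(b), where $n$ is even and $G$ has a triangle, the above already yields $d\le\lambda'(G)=(d-d')r\le2d-2$. What remains — and what I expect to be the main obstacle — is to show that $\lambda'(G)$ divides $n$ (so that $\lambda'(G)=n/m$ for an integer $m\ge2$) and that here $\lambda'(G)\le2d-3$. The natural route is to study the quotient multigraph on the $m$ blocks: it is $\lambda'(G)$-regular and, since $\mathrm{Aut}(G)$ permutes the blocks transitively, vertex-transitive, and from its structure one would try to extract both the divisibility $(d-d')\mid m$ and the exclusion of the extremal configurations that would yield $\lambda'(G)=2d-2$ (for instance $r=2$, or $G[X]\cong K_{d-1}$). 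This block-quotient analysis, together with the submodularity bookkeeping of step~(2), is where the proof is genuinely delicate; the remaining content is just the counting identity $\lambda'(G)=(d-d')r$ and elementary inequalities.
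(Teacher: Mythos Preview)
First, note that the present paper does not prove Theorem~\ref{thm1.2} at all: it is quoted from~\cite{xx02} as background, so there is no ``paper's own proof'' here to compare against. That said, your atom-method outline is exactly the strategy used in~\cite{xx02}, and your treatment of part~(a) is correct.

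The genuine gap is in part~(b), and the fix is much simpler than the quotient-multigraph analysis you propose. You are looking for a divisibility statement $(d-d')\mid m$; what is actually true, and what~\cite{xx02} proves, is the stronger fact $d-d'=1$. Once you know this, $\lambda'(G)=r$ divides $n$ because the atoms partition $V(G)$, and $d\le r\le 2d-3$ is immediate (the upper bound because $r=\lambda'(G)<2d-2$). The point is that the very same inequality trick you used for $n$ odd works here. In case~(b) you may assume $\lambda'(G)<2d-2$ (this is what ``otherwise'' means: the complement of the \emph{conclusion} of~(a), not of its hypothesis), hence $r\ge 3$ and $d'\ge 2$. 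From $r\ge d'+1$ and $(d-d')r<2d-2$ you get
\[
(d-d')(d'+1)\;\le\;(d-d')\,r\;<\;2d-2 .
\]
But $(d-d')(d'+1)-2(d-1)=-(d'-1)\bigl(d'-(d-2)\bigr)$, which is nonnegative for all $1\le d'\le d-2$. So the strict inequality forces $d'>d-2$, i.e.\ $d'=d-1$. No block-quotient argument is needed, and there is nothing delicate left: $\lambda'(G)=r=n/m$ with $m=n/r\ge 2$, and $G[X]$ being $(d-1)$-regular on $r$ vertices gives $r\ge d$.
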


\begin{theorem}[{See~\cite{lx04}}]\label{thm1.3}\quad
For any given integers $d$ and $s$ with $d\geqslant 3$ and
$0\leqslant s\leqslant d-3$, there is a connected vertex-transitive
graph $G$ with degree $d$ and $\lambda'(G)=d+s$ if and only if
either $d$ is odd or $s$ is even.
\end{theorem}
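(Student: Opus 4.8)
The plan is to prove the two implications separately: the ``only if'' direction is a short parity computation, while the ``if'' direction requires an explicit construction whose restricted edge-connectivity must then be determined exactly, which is where the work lies.

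For the ``only if'' direction, suppose $G$ is a connected vertex-transitive graph of degree $d\geqslant 3$ with $\lambda'(G)=d+s$ and $0\leqslant s\leqslant d-3$. Since $G$ is $d$-regular it is not a star and has order at least $d+1\geqslant 4$, so by Theorem~\ref{thm1.1} a minimum restricted edge cut $[X,\overline{X}]$ exists; $G[X]$ has no isolated vertex, hence $|X|\geqslant 2$. Counting the edges leaving $X$ and using regularity,
\[
d+s=\lambda'(G)=\sum_{v\in X}d_G(v)-2e(G[X])=d|X|-2e(G[X]),
\]
so $s=d(|X|-1)-2e(G[X])$. If $d$ is even the right-hand side is even, hence $s$ is even; that is, $d$ is odd or $s$ is even. (Theorem~\ref{thm1.2}(b) additionally gives that $n$ is even and $\lambda'(G)=n/m$ for some integer $m\geqslant 2$, a useful consistency check but not needed here.)

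For the ``if'' direction, assume $0\leqslant s\leqslant d-3$ and that $d$ is odd or $s$ is even, and put $r=d+s$. The key observation is that a $(d-1)$-regular connected vertex-transitive graph $H$ on $r$ vertices exists precisely under this parity hypothesis: a $(d-1)$-regular graph on $r$ vertices exists if and only if $(d-1)r$ is even, equivalently $d$ is odd or $s$ is even, and in that case one may take $H$ to be the circulant on $\mathbb{Z}_r$ with connection set $\{\pm 1,\dots,\pm\tfrac{d-1}{2}\}$ when $d$ is odd, and $\{\pm 1,\dots,\pm\tfrac{d-2}{2}\}\cup\{\tfrac r2\}$ when $d$ is even and $s$ is even. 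I would then set $G=H\mathbin{\square}K_2$, that is, two disjoint copies of $H$ joined by a perfect matching. Then $G$ is $d$-regular, connected, of order $2r\geqslant 6$, and vertex-transitive (indeed a Cayley graph on $\mathbb{Z}_r\times\mathbb{Z}_2$ for the circulant $H$ above). Removing the matching isolates one copy of $H$ using exactly $r=d+s$ edges and leaves no isolated vertex, since each side is a copy of $H$ with minimum degree $d-1\geqslant 2$; hence $\lambda'(G)\leqslant d+s$.

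The substantive step is the reverse inequality $\lambda'(G)\geqslant d+s$. Given a restricted edge cut $[U,\overline{U}]$, write $U=(A\times\{0\})\cup(B\times\{1\})$ with $A,B\subseteq V(H)$, so that
\[
e(U,\overline{U})=e_H(A,V(H)\setminus A)+e_H(B,V(H)\setminus B)+|A\,\triangle\, B|,
\]
and split into cases according to how $U$ meets the two copies. If $A$ and $B$ are both proper and non-empty, then since a connected vertex-transitive graph has edge-connectivity equal to its degree we get $e_H(A,V(H)\setminus A)\geqslant d-1$ and likewise for $B$, so $e(U,\overline U)\geqslant 2(d-1)\geqslant d+s$ because $s\leqslant d-3$. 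If $U$ lies in one copy, say $B=\emptyset$ and $\emptyset\ne A\subsetneq V(H)$ (the other cases reducing to this by exchanging the two copies and/or $U$ with $\overline U$), then $e(U,\overline U)=e_H(A,V(H)\setminus A)+|A|=d|A|-2e(H[A])\geqslant |A|(d-|A|+1)$, with $|A|\geqslant 2$ since the cut is restricted; for $|A|\geqslant s+1$ the bound $e_H(A,V(H)\setminus A)\geqslant d-1$ already gives $e(U,\overline U)\geqslant(d-1)+(s+1)=d+s$, while for $2\leqslant|A|\leqslant s$ an elementary check (the concave function $j\mapsto(j-1)(d-j)$ is at least $s$ on $\{2,\dots,s\}$ when $d\geqslant s+3$) gives $|A|(d-|A|+1)\geqslant d+s$. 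Thus $e(U,\overline U)\geqslant d+s$ in all cases and $\lambda'(G)=d+s$, as required. The main obstacle is exactly this last verification, namely excluding a cheaper restricted cut; the delicate configurations are the near-balanced cuts inside a single copy of $H$, where the internal edge-connectivity $\lambda(H)=d-1$ competes with the $|A|$ crossing matching edges. (Alternatively, one may present $G$ as a replacement-product graph and invoke the general lower bounds on $\lambda$ and $\lambda'$ of such graphs established in this paper, which makes the verification routine.)
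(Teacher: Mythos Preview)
The paper does not actually prove Theorem~\ref{thm1.3}; it is quoted from \cite{lx04} and only the statement is given. The only information the paper adds is the remark immediately following it: in \cite{lx04}, for odd $d$ the authors construct a vertex-transitive graph with $\lambda'(G)=d+s=\tfrac{1}{2}n$. Your construction $G=H\mathbin{\square}K_2$ with $H$ a $(d-1)$-regular circulant on $d+s$ vertices has exactly this order $n=2(d+s)$, so your approach is consistent with (and very likely identical to) the one in the cited source.

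Your argument is correct. The parity computation for the ``only if'' direction is clean, and the case analysis for the lower bound $\lambda'(G)\geqslant d+s$ goes through: in Case~1 you get $2(d-1)\geqslant d+s$ from $s\leqslant d-3$, and in Case~2 the endpoint check $(j-1)(d-j)\geqslant s$ for $j\in\{2,s\}$ is valid since $d\geqslant s+3$. One cosmetic omission: the configuration $A=V(H)$, $B=\emptyset$ (one full copy of $H$) does not reduce to either of your two named cases under complementation or swapping, but it is precisely the matching cut of size $r=d+s$ that you already used for the upper bound, so it trivially meets the lower bound as well. You might also state explicitly that the ``restricted'' condition in Case~2 forces $H[A]$ to have no isolated vertex (hence $|A|\geqslant 2$), though you only use the inequality $e(H[A])\leqslant\binom{|A|}{2}$, which needs nothing more.
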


In~\cite{lx04}, for any odd integer $d (\geqslant 3)$ and any
integer $s$ with $0\leqslant s\leqslant d-3$, the authors construct
a vertex-transitive graph $G$ with degree $d$ and
$\lambda'(G)=d+s=\frac 12\,n$. Note that the condition ``$d\leqslant
\lambda'(G)\leqslant 2d-3$" implies $\lambda'(G)=d$ if $d=3$. By
Theorem~\ref{thm1.2}, if a vertex-transitive graph $G$ is not
$\lambda'$-optimal, then $d\leqslant \lambda'(G) \leqslant
\frac{n}{2}$. Thus, a quite natural problem is proposed as follows
(see Conjecture 1 in Xu~\cite{x03b}).

\begin{problem}\label{prob1.4}\quad
Given an odd integer $d\,(\geqslant 5)$ and any integer $s$ with
$1\leqslant s\leqslant d-3$, whether or not there is a
vertex-transitive graph $G$ with order $n$ and degree $d$ such that
$\lambda'(G)=d+s<\frac 12\,n$.
\end{problem}

In this paper, we answer this question confirmedly by constructing a
Cayley graph, which is the replacement product of two Cayley graphs.

We will discuss the restricted edge-connectivity of a replacement
product graph in this paper. The rest of this paper is organized as
follows. In Section 2, we give some definitions with related
results. In Section 3, we establish the bounds on the
edge-connectivity for a replacement product graph and determine
exact values under some special conditions. In Section 4, we give
the lower and upper bounds on restricted edge-connectivity for
replacement product graphs and determine exact values under some
given conditions. In Section 5, we focus on Cayley graphs and
further confirm that under certain conditions, the replacement
product of two Cayley graphs is still a Cayley graph, and give a
necessary and sufficient condition for such Cayley graphs to have
maximum restricted edge-connectivity. Based on these results, we
construct a Cayley graph to answer Problem~\ref{prob1.4}
confirmedly. A conclusion is in Section 6.

\section{Preliminaries}

We first introduce the concept of the restricted edge-connectivity,
proposed by Esfahanian and Hakimi~\cite{eh88}, stated here slightly
different from theirs.

Let $G$ be a non-trivial connected graph and $F\subset E(G)$. If
$G-F$ is disconnected and contains no isolated vertices, then $S$ is
called a restricted edge-cut of $G$. The restricted
edge-connectivity of $G$, denoted by $\lambda'(G)$, is defined as
the minimum cardinality over all restricted edge-cuts of $G$.
Esfahanian and Hakimi~\cite{eh88} proved $\lambda'(G)$ is
well-defined for any connected graph $G$ of order $n (\geqslant 4)$
except for a star $K_{1,n-1}$. A graph $G$ is {\it
$\lambda'$-connected} if $\lambda'(G)$ exists, and a restricted
edge-cut $F$ is a {\it $\lambda'$-cut} if $|F|=\lambda'(G)$. A
$\lambda'$-connected graph is {\it $\lambda'$-optimal} if
$\lambda'(G) = \xi(G)$, and {\it not $\lambda'$-optimal} otherwise.
It is clear that if $G$ is a $\delta$-regular and $\lambda'$-optimal
graph of order $n$, then $\lambda(G)=\delta(G)=\delta$ and
$n\geqslant 4$.

The restricted edge-connectivity provides a more accurate measure of
fault-tolerance of networks than the edge-connectivity
(see~\cite{e89, eh88}). Thus, determining the value of $\lambda'$
for some special classes of graphs or characterizing
$\lambda'$-optimal graphs have received considerable attention in
the literature (see, for instance,
\cite{eh88,hv04,hv05,ll99,m03,mj02,wl02,wl99,wl01}).


Let $\Gamma$ be a finite group, and let $S$ be a subset of $\Gamma$
not containing the identity element of $\Gamma$. The {\it Cayley
graph} $C_\Gamma(S)$ is the graph having vertex-set $\Gamma$ and
edge-set $\{xy:\ x^{-1}y\in S, x,y\in \Gamma\}$.

Generally speaking, $C_\Gamma(S)$ is a digraph. The following result
is well-known (see, for instance, Xu~\cite{x01}).

\begin{lemma}\label{lem2.1}\quad
Cayley graphs are vertex-transitive and the Cartesian product of
Cayley graphs is a Cayley graph.
\end{lemma}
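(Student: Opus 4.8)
The plan is to prove both assertions by direct verification: exhibiting explicit automorphisms in the first part, and an explicit group together with an explicit connection set in the second.

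For vertex-transitivity, I would fix a Cayley graph $C_\Gamma(S)$ and, for each $g\in\Gamma$, consider the left translation $\sigma_g\colon x\mapsto gx$. This is clearly a bijection of $\Gamma=V(C_\Gamma(S))$ onto itself, and for all $x,y\in\Gamma$ we have $(\sigma_g x)^{-1}(\sigma_g y)=x^{-1}g^{-1}gy=x^{-1}y$, so $xy$ is an edge of $C_\Gamma(S)$ if and only if $\sigma_g(x)\sigma_g(y)$ is; hence $\sigma_g\in\mathrm{Aut}\,(C_\Gamma(S))$. Given two vertices $x$ and $y$, taking $g=yx^{-1}$ yields $\sigma_g(x)=y$, which proves vertex-transitivity.

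For the Cartesian product, let $C_{\Gamma_1}(S_1)$ and $C_{\Gamma_2}(S_2)$ be Cayley graphs. I would take the direct product group $\Gamma=\Gamma_1\times\Gamma_2$ together with the connection set
\[
S=\bigl(S_1\times\{e_2\}\bigr)\cup\bigl(\{e_1\}\times S_2\bigr),
\]
where $e_i$ denotes the identity of $\Gamma_i$; since $e_i\notin S_i$, the set $S$ avoids the identity $(e_1,e_2)$ of $\Gamma$ (and $S$ is inverse-closed whenever $S_1$ and $S_2$ are). The key step is then to observe that for vertices $(u_1,u_2),(v_1,v_2)\in\Gamma$ one has $(u_1,u_2)^{-1}(v_1,v_2)=(u_1^{-1}v_1,\,u_2^{-1}v_2)\in S$ precisely when either $u_2=v_2$ and $u_1^{-1}v_1\in S_1$, or $u_1=v_1$ and $u_2^{-1}v_2\in S_2$; this is exactly the adjacency rule defining the Cartesian product $C_{\Gamma_1}(S_1)\,\square\,C_{\Gamma_2}(S_2)$. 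Hence the Cartesian product coincides with $C_\Gamma(S)$, and the case of finitely many factors follows by induction.

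Since both arguments are straightforward verifications, there is no serious obstacle; the only point requiring a little care is the choice of the connection set $S$ for the product — one must use the ``axis'' elements $(S_1\times\{e_2\})\cup(\{e_1\}\times S_2)$ rather than the full product $S_1\times S_2$ — and then check that this $S$ still avoids the identity and remains symmetric, so that $C_\Gamma(S)$ is again a graph of the required type.
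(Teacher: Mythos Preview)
Your argument is correct and is the standard proof of both assertions. The paper itself does not supply a proof of this lemma: it is stated as a well-known fact with a reference to Xu~\cite{x01}, so there is nothing to compare against beyond noting that your left-translation argument and your choice of the ``axis'' generating set $S=(S_1\times\{e_2\})\cup(\{e_1\}\times S_2)$ for the direct product group are exactly the classical ones.
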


If $S=S^{-1}$, then $C_\Gamma(S)$ is an undirected graph. We are
interested in undirected graphs in this paper.

We now introduce two classes of Cayley graphs, because of their
excellent features, they are the most popular, versatile and
efficient topological structures of interconnection networks (see,
for instance, Xu~\cite{x01}).

\begin{example}\label{exa2.2}\quad {\rm
A {\it circulant graph $G(n;\pm S)$}, where
$S=\{s_1,s_2,\dots,s_k\}\subseteq \{1,2,\ldots,$ $\lfloor \frac
12n\rfloor\}$ with $s_1<s_2<\ldots<s_k$ and $n\geqslant 3$, has
vertex-set $V=\{0,1,\ldots, n-1\}$ and edge-set $E=\{ij$:
$|j-i|\equiv$ $s_i\,({\rm mod}\ n)\ \text{for some}\ s_i\in S\}$.

Clearly, $G(n;\pm 1)$ is a cycle $C_n$ and $G(n;\pm
\{1,2,\ldots,\lfloor \frac 12n\rfloor\})$ is a complete graph $K_n$.
The two graphs shown in Figure~\ref{f1} are $G(8;\pm \{1,3\})$ and
$G(8;\pm \{1,3,4\})$.

Note that the identity element of the ring group $\mathbb{Z}_n
(n\geqslant 2)$ is just the zero element, and the inverse of any
$i\in \mathbb{Z}_n$ is $n-i$. If let $S\subseteq \{1,2,\ldots,n-1\}$
and $S^{-1}=S$, then Cayley graph $C_{\mathbb{Z}_n}(S)$ is a
circulant graph $G(n;S)$ if $ n\geqslant 3$, and
$C_{\mathbb{Z}_2}(S)=K_2$. Thus, circulant graphs are
vertex-transitive by Lemma~\ref{lem2.1}.

Li and Li~\cite{ll98} showed that $G(n; \pm S)$ is
$\lambda'$-optimal and $\lambda'(G(n; \pm S))=4k-2$ if $k\geqslant
2$ and $s_k<\frac{n}{2}$.
 }
\end{example}

\begin{figure}[h]
\begin{pspicture}(-5,-1.7)(-1.5,1.8)
\rput{90}{%
\SpecialCoor\degrees[8]
  \multido{\i=0+1}{8}{\cnode(1.5;\i){.11}{1\i}}
 \ncline{10}{17}\ncline{10}{11}\ncline{11}{12}\ncline{12}{13}\ncline{13}{14}\ncline{14}{15}\ncline{15}{16}\ncline{16}{17}
 \ncline{10}{13}\ncline{10}{15}\ncline{11}{14}\ncline{11}{16}\ncline{12}{15}\ncline{12}{17}
 \ncline{13}{16}\ncline{14}{17}
  }%
 \rput(0,1.8){\scriptsize$0$}\rput(0,-1.8){\scriptsize$4$}
 \rput(-1.3,1.1){\scriptsize$7$}\rput(1.3,1.1){\scriptsize$1$}
 \rput(-1.8,0){\scriptsize$6$}\rput(1.8,0){\scriptsize$2$}
 \rput(-1.3,-1.1){\scriptsize$5$}\rput(1.3,-1.1){\scriptsize$3$}
 \end{pspicture}
 \begin{pspicture}(-6,-1.7)(-1.5,1.8)
\rput{90}{%
\SpecialCoor\degrees[8]
  \multido{\i=0+1}{8}{\cnode(1.5;\i){.11}{1\i}}
 \ncline{10}{17}\ncline{10}{11}\ncline{11}{12}\ncline{12}{13}\ncline{13}{14}\ncline{14}{15}\ncline{15}{16}\ncline{16}{17}
 \ncline{10}{13}\ncline{10}{14}\ncline{10}{15}
 \ncline{11}{14}\ncline{11}{15}\ncline{11}{16}
 \ncline{12}{15}\ncline{12}{16}\ncline{12}{17}
 \ncline{13}{16}\ncline{13}{17}\ncline{14}{17}
  }%
 \rput(0,1.8){\scriptsize$0$}\rput(0,-1.8){\scriptsize$4$}
 \rput(-1.3,1.1){\scriptsize$7$}\rput(1.3,1.1){\scriptsize$1$}
 \rput(-1.8,0){\scriptsize$6$}\rput(1.8,0){\scriptsize$2$}
 \rput(-1.3,-1.1){\scriptsize$5$}\rput(1.3,-1.1){\scriptsize$3$}
 \end{pspicture}
\caption{
\label{f1}                                               
\footnotesize  (a)\ $G(8;\pm \{1,3\})$;\ (b)\ $G(8;\pm \{1,3,4\})$}
\end{figure}

\begin{example}\label{exa2.3}\quad {\rm
The {\it hypercube} $Q_n$ has the vertex-set consisting of $2^n$
binary strings of length $n$, two vertices being linked by an edge
if and only if they differ in exactly one position. Hypercubes
$Q_1,Q_2, Q_3$ and $Q_4$ are shown in Figure~\ref{f2}.

\begin{figure}[h]
\psset{unit=0.9}
\begin{pspicture}(-3.3,-.3)(0,4)
\cnode(1,1){.1}{0}\rput(.75,1){\scriptsize0}
\cnode(1,3){.1}{1}\rput(.75,3){\scriptsize1}
\ncline{0}{1}\rput(1,.5){\scriptsize$Q_1$}
\end{pspicture}
\begin{pspicture}(-1.5,-.3)(2,3)
\cnode(1,1){.1}{00}\rput(.7,1){\scriptsize00}
\cnode(1,3){.1}{10}\rput(.7,3){\scriptsize10}
\cnode(3,1){.1}{01}\rput(3.35,1){\scriptsize01}
\cnode(3,3){.1}{11}\rput(3.3,3){\scriptsize11}
\ncline{00}{01}\ncline{01}{11}\ncline{11}{10}\ncline{10}{00}
\rput(2,.5){\scriptsize$Q_2$}
\end{pspicture}
\begin{pspicture}(-1.5,-.3)(2,3)
\cnode(1,1){.1}{000}\rput(.6,1){\scriptsize000}
\cnode(1,3){.1}{001}\rput(.6,3){\scriptsize001}
\cnode(3,1){.1}{100}\rput(3.4,1){\scriptsize100}
\cnode(3,3){.1}{101}\rput(3.4,3){\scriptsize101}
\cnode(1.7,1.7){.1}{010}\rput(2.1,2.){\scriptsize010}
\cnode(1.7,3.7){.1}{011}\rput(1.3,3.8){\scriptsize011}
\cnode(3.7,1.7){.1}{110}\rput(4.1,1.8){\scriptsize110}
\cnode(3.7,3.7){.1}{111}\rput(4.1,3.8){\scriptsize111}
\ncline{000}{001}\ncline{001}{101}\ncline{101}{100}\ncline{100}{000}
\ncline{010}{011}\ncline{011}{111}\ncline{111}{110}\ncline{110}{010}
\ncline{000}{010}\ncline{001}{011}\ncline{101}{111}\ncline{100}{110}
\rput(2,.5){\scriptsize$Q_3$}
\end{pspicture}
\vskip2pt
\begin{pspicture}(-4.2,0.5)(5,4)
\cnode(1,1){.1}{0000}\rput(.55,1){\scriptsize0000}
\cnode(1,3){.1}{0100}\rput(.55,3){\scriptsize0100}
\cnode(3,1){.1}{0001}\rput(2.62,1.2){\scriptsize0001}
\cnode(3,3){.1}{0101}\rput(2.62,3.2){\scriptsize0101}
\cnode(1.9,1.8){.1}{0010}\rput(1.45,1.85){\scriptsize0010}
\cnode(1.9,3.8){.1}{0110}\rput(1.45,3.85){\scriptsize0110}
\cnode(3.9,1.8){.1}{0011}\rput(3.51,2){\scriptsize0011}
\cnode(3.9,3.8){.1}{0111}\rput(3.51,4){\scriptsize0111}
\cnode(5,1){.1}{1001}\rput(5.4,.8){\scriptsize1001}
\cnode(5,3){.1}{1101}\rput(5.4,2.8){\scriptsize1101}
\cnode(7,1){.1}{1000}\rput(7.4,.85){\scriptsize1000}
\cnode(7,3){.1}{1100}\rput(7.4,2.85){\scriptsize1100}
\cnode(5.9,1.8){.1}{1011}\rput(6.3,2){\scriptsize1011}
\cnode(5.9,3.8){.1}{1111}\rput(6.3,4){\scriptsize1111}
\cnode(7.9,1.8){.1}{1010}\rput(8.34,1.8){\scriptsize1010}
\cnode(7.9,3.8){.1}{1110}\rput(8.34,3.8){\scriptsize1110}
\ncline{0000}{0001}\ncline{0001}{0101}\ncline{0101}{0100}\ncline{0100}{0000}
\ncline{0010}{0011}\ncline{0011}{0111}\ncline{0111}{0110}\ncline{0110}{0010}
\ncline{0000}{0010}\ncline{0001}{0011}\ncline{0101}{0111}\ncline{0100}{0110}
\ncline{1001}{1000}\ncline{1000}{1010}\ncline{1010}{1011}\ncline{1011}{1001}
\ncline{1101}{1100}\ncline{1100}{1110}\ncline{1110}{1111}\ncline{1111}{1101}
\ncline{1001}{1101}\ncline{1000}{1100}\ncline{1010}{1110}\ncline{1011}{1111}
\nccurve[angleA=-20,angleB=-160]{0000}{1000}
\nccurve[angleA=-20,angleB=-160]{0010}{1010}
\nccurve[angleA=-20,angleB=-160]{0001}{1001}
\nccurve[angleA=-20,angleB=-160]{0011}{1011}
\nccurve[angleA=20,angleB=160]{0100}{1100}
\nccurve[angleA=20,angleB=160]{0110}{1110}
\nccurve[angleA=20,angleB=160]{0101}{1101}
\nccurve[angleA=20,angleB=160]{0111}{1111}
\rput(1.5,.4){\scriptsize$Q_4$}
\end{pspicture}
\caption{
\label{f2}                                       
\footnotesize  The $n$-cubes $Q_1$, $Q_2$, $Q_3$ and $Q_4$}
\end{figure}

It is easy to see that the hypercube $Q_n$ is Cartesian products
$K_2\times K_2 \times \cdots \times K_2 $ of $n$ complete graph
$K_2$. Let $(\mathbb{Z}_2)^n=\mathbb{Z}_2\times \mathbb{Z}_2\times
\cdots \times \mathbb{Z}_2$ and
 \begin{equation}\label{e2.1}
e_0=\underbrace{0\cdots0}_{n}\ \ {\rm and}\ \
e_i=\underbrace{0\cdots0}_{i-1}1\underbrace{0\cdots0}_{n-i}\ \
\text{for each $i=1,2,\ldots,n$}.
 \end{equation}
Then $e_0$ is the identity element of $(\mathbb{Z}_2)^n$ and, by
Lemma~\ref{lem2.1}, $Q_n$ is a Cayley graph
$C_{(\mathbb{Z}_2)^n}(S)$ and so is vertex-transitive, where
$S=\{e_1, e_2,\cdots,e_n\}$, each of which is self-inverse and,
hence, $S=S^{-1}$.

Esfahanian~\cite{e89} showed that the hypercube $Q_n$ is
$\lambda'$-optimal, that is, $\lambda'(Q_n)=2n-2$ for $n\geqslant
2$. }
 \end{example}

Now, we introduce {\it the replacement product}. There are several
equivalent definitions of the replacement product proposed by
different authors (see~\cite{hlw06, rvw02}). Here, we adopt the
definition proposed by Hoory {\it et al.}~\cite{hlw06}. Let $G_1$ be
a $\delta_1$-regular graph on $n$ vertices and $G_2$ be a
$\delta_2$-regular graph on $\delta_1$ vertices. For every vertex
$x\in V(G_1)$, we label on all edges incident with $x$,
say $e^1_x,e^2_x,\dots, e^{\delta_1}_x$.

\begin{definition}\label{Def2.4}\quad
Let $G_1$ be a $\delta_1$-regular graph on $n$ vertices and $G_2$ be
a $\delta_2$-regular graph on $\delta_1$ vertices. The {\it
replacement product} of $G_1$ and $G_2$ is a graph, denoted by
$G_1\textregistered G_2$, where $V(G_1\textregistered
G_2)=V(G_1)\times V(G_2)$, two distinct vertices $(x,i)$ and
$(y,j)$, where $x,y\in V(G_1)$ and $i,j\in V(G_2)$, are linked by an
edge in $G_1\textregistered G_2$ if and only if either $x=y$ and
$ij\in E(G_2)$, or $xy\in E(G_1)$ and $e_x^i=xy=e_y^j$.
\end{definition}

Figure~\ref{f3} shows the replacement product of $K_4$ and $C_3$
with given labelling of edges around vertices of $K_4$.

\begin{figure}[h]
\begin{center}
\begin{pspicture}(-6,-2.5)(6,3.5)
\psset{radius=1.2}

\cnode(-5,0){3pt}{a} \cnode(-3.268,-1){3pt}{b} \cnode(-5,2){3pt}{c}
\cnode(-6.732,-1){3pt}{d}
\ncline[linewidth=1.6pt]{a}{b}\ncline[linewidth=1.6pt]{a}{c}\ncline[linewidth=1.6pt]{a}{d}
\ncline[linewidth=1.6pt]{b}{c}\ncline[linewidth=1.6pt]{b}{d}\ncline[linewidth=1.6pt]{c}{d}

 \rput(-5,2.3){\scriptsize $x$}\rput(-7,-1.){\scriptsize $y$}\rput(-5,-.3){\scriptsize $z$}\rput(-3,-1.){\scriptsize $u$}
 \rput(-5.5,1.6){\scriptsize $e_x^0$}\rput(-4.8,1.2){\scriptsize $e_x^2$}\rput(-4.45,1.6){\scriptsize $e_x^1$}
 \rput(-6.6,-0.2){\scriptsize $e_y^0$}\rput(-6,-0.3){\scriptsize $e_y^1$}\rput(-6.2,-1.3){\scriptsize $e_y^2$}
 \rput(-4.75,.4){\scriptsize $e_z^0$}\rput(-5.5,-0.){\scriptsize $e_z^1$}\rput(-4.5,-0.){\scriptsize $e_z^2$}
 \rput(-4.,-1.25){\scriptsize $e_u^1$}\rput(-4.,-.3){\scriptsize $e_u^0$}\rput(-3.4,-0.3){\scriptsize $e_u^2$}

 \rput(-5,-1.8){\scriptsize $K_4$} \rput(-2.8,0.29){$\circledR$}
\cnode(-1,-0.29){3pt}{1}\cnode(-1.5,0.58){3pt}{2}\cnode(-2,-0.29){3pt}{3}
\ncline{1}{2}\ncline{2}{3}\ncline{1}{3}
 \rput(-1.5,.9){\scriptsize$0$} \rput(-2.25,-0.3){\scriptsize$2$} \rput(-0.7,-0.3){\scriptsize$1$}
 \rput(-1.5,-.8){\scriptsize $C_3$} \rput(0,0.29){$=$}


\cnode(4,-0.29){3pt}{a1}\cnode(3.5,0.58){3pt}{a2}\cnode(3,-0.29){3pt}{a3}
\ncline{a1}{a2}\ncline{a2}{a3}\ncline{a1}{a3}
\cnode(5.232,-1){3pt}{b1}\cnode(5.732,-1.866){3pt}{b2}\cnode(6.232,-1){3pt}{b3}
\ncline{b1}{b2}\ncline{b2}{b3}\ncline{b1}{b3}
\cnode(3.5,2){3pt}{c1}\cnode(4,2.866){3pt}{c2}\cnode(3,2.866){3pt}{c3}
\ncline{c1}{c2}\ncline{c2}{c3}\ncline{c1}{c3}
\cnode(1.768,-1){3pt}{d1}\cnode(0.768,-1){3pt}{d2}\cnode(1.268,-1.866){3pt}{d3}
\ncline{d1}{d2}\ncline{d2}{d3}\ncline{d1}{d3}

\ncline[linewidth=1.6pt]{a1}{b1}\ncline[linewidth=1.6pt]{a2}{c1}\ncline[linewidth=1.6pt]{a3}{d1}
\ncline[linewidth=1.6pt]{b3}{c2}\ncline[linewidth=1.6pt]{c3}{d2}\ncline[linewidth=1.6pt]{d3}{b2}
 \rput(2.9,3.2){\scriptsize $(x,0)$}\rput(4.1,3.2){\scriptsize $(x,1)$}\rput(3.95,1.8){\scriptsize $(x,2)$}
 \rput(.3,-1.){\scriptsize $(y,0)$}\rput(2.25,-1.1){\scriptsize $(y,1)$}\rput(.8,-2.0){\scriptsize $(y,2)$}
 \rput(4,.6){\scriptsize $(z,0)$}\rput(4.5,-0.15){\scriptsize $(z,2)$}\rput(2.5,-0.2){\scriptsize $(z,1)$}
 \rput(4.8,-1.2){\scriptsize $(u,0)$}\rput(6.75,-1.){\scriptsize $(u,2)$}\rput(6.2,-2){\scriptsize $(u,1)$}

\rput(3.5,-2.6){\scriptsize $K_4\circledR C_3$}

\end{pspicture}
\caption{\label{f3}{\footnotesize  $K_4\textregistered C_3$.}}
\end{center}
\end{figure}

By Definition~\ref{Def2.4}, we can obtain the following proposition.

\begin{proposition}\label{prop2.5}\quad
$G_1\textregistered G_2$ is $(\delta_2+1)$-regular and has
$n\,\delta_1$ vertices. Moreover, the vertex-set of
$G_1\textregistered G_2$ can be partitioned into
 $$
 \text{$\{X_1,X_2,\dots,X_n\}$ such that
 $G[X_i]\cong G_2$ for each $i\in I_n$.}
 $$
 \end{proposition}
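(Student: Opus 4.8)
The plan is a direct verification from Definition~\ref{Def2.4}, with no deep ideas needed. First, the vertex count: since $V(G_1\textregistered G_2)=V(G_1)\times V(G_2)$ with $|V(G_1)|=n$ and $|V(G_2)|=\delta_1$, the graph has exactly $n\delta_1$ vertices.

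Next I would prove regularity by fixing an arbitrary vertex $(x,i)$ (with $x\in V(G_1)$, $i\in V(G_2)$) and splitting its neighbours according to the two clauses of the adjacency rule. The neighbours of ``intra-copy'' type are the vertices $(x,j)$ with $ij\in E(G_2)$; as $G_2$ is $\delta_2$-regular there are exactly $\delta_2$ of them. The neighbours of ``cross-copy'' type are the vertices $(y,j)$ with $xy\in E(G_1)$ and $e_x^i=xy=e_y^j$. The point to observe is that the labelling $i\mapsto e_x^i$ is a bijection from $V(G_2)$ onto the set of $\delta_1$ edges of $G_1$ incident with $x$; so the label $i$ determines a unique edge $e_x^i=xy$, and that very edge carries a unique label $j$ at its other endpoint $y$, namely $e_y^j=xy$. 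Hence $(x,i)$ has exactly one cross-copy neighbour. Therefore $d_{G_1\textregistered G_2}(x,i)=\delta_2+1$ for every vertex, that is, $G_1\textregistered G_2$ is $(\delta_2+1)$-regular.

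Finally, for the partition I would group vertices by first coordinate: for $x\in V(G_1)$ set $X_x=\{(x,i):i\in V(G_2)\}$, and re-index these $n$ classes as $X_1,\dots,X_n$ via a bijection between $V(G_1)$ and $I_n$. These classes obviously partition $V(G_1)\times V(G_2)$. For the induced subgraph $G[X_x]$ (where $G=G_1\textregistered G_2$): two vertices $(x,i),(x,j)$ of $X_x$ can never be joined by a cross-copy edge, since that clause requires an edge $xy\in E(G_1)$ with both endpoints equal to $x$, which is impossible in a simple graph. Hence $(x,i)(x,j)\in E(G)$ if and only if $ij\in E(G_2)$, so $(x,i)\mapsto i$ is an isomorphism $G[X_x]\to G_2$, giving $G[X_i]\cong G_2$ for each $i\in I_n$.

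The only step requiring any attention is the bijectivity of the edge-labelling around each vertex of $G_1$: this is precisely what is presupposed when the $\delta_1$ edges at $x$ are listed as $e_x^1,\dots,e_x^{\delta_1}$ (equivalently, indexed by the $\delta_1=|V(G_2)|$ vertices of $G_2$), and it is what makes the cross-copy neighbour of $(x,i)$ both exist and be unique. Everything else is bookkeeping.
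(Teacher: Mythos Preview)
Your proposal is correct and follows exactly the approach the paper takes: the paper does not give a separate proof of Proposition~\ref{prop2.5} but simply asserts that it follows from Definition~\ref{Def2.4}, and your write-up is precisely the direct unpacking of that definition. The only point of substance, as you note, is that the labelling $e_x^1,\dots,e_x^{\delta_1}$ is a bijection between $V(G_2)$ and the edges at $x$, which is exactly what the paper stipulates before the definition.
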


The {\it inflation or inflated graph} of $G$ is a graph obtained
from $G$ by replacing each vertex $x$ by a complete graph
$K_{d_G(x)}$ and joining each edge to a different vertex of
$K_{d_G(x)}$. Inflation graphs have been studied by several authors
(for example, see~\cite{dh96,f98,f01,ksc04,lz14,p00}). Clearly, if
$G$ is $n$-regular then $G\textregistered K_n$ is the inflation
graph of $G$.  In special, Liu and Zhang~\cite{lz14} showed that
$Q_n\textregistered K_n$ is a Cayley graph.

The {\it lexicographic product} $G_1[G_2]$ of two graphs $G_1$ and
$G_2$ is a graph with vertex-set $V(G_1)\times V(G_2)$, and in which
two vertices $(x,i)$ and $(y,j)$ are adjacent if and only if either
$x=y$ and $ij\in E(G_2)$ or $xy\in E(G_1)$, without the condition
``$e_x^i=xy=e_y^j$". Thus, the replacement product graph
$G_1\textregistered G_2$ is a subgraph of the lexicographic product
graph $G_1[G_2]$. In special, Li {\it et al}~\cite{lwxz11} showed
that $G_1[G_2]$ is a Cayley graph if $G_1$ and $G_2$ are Cayley
graphs.

The replacement product of two graphs is an important constructing
method, which can obtain a larger graph from two smaller graphs, and
so it has been widely used to address many fundamental problems in
such areas as graph theory, combinatorics, probability, group
theory, in the study of expander graphs and graph-based coding
schemes~\cite{alw01,don13, gro83,hlw06,krs03,ksr08,rvw02}. The
replacement product has been also used in the designing of an
interconnection networks. For example, the well-known
$n$-dimensional cube-connected cycle $CCC_n$ is a replacement
product $Q_n\textregistered C_n$, where $Q_n$ is a hypercube and
$C_n$ is a cycle of length $n$ (see Preparata and
Vuillemin~\cite{p81}). The graph shown in Figure~\ref{f4} is
$Q_3\textregistered C_3=CCC_3$. In addition, $n$-dimensional
hierarchical hypercube is a replacement product
$Q_{2^n}\textregistered Q_n$ (see Malluhi and Bayoumi~\cite{mb94}).

\begin{figure}[h]
\begin{center}
\psset{unit=.9}
\begin{pspicture}(-5,-4)(7,4)

 \cnode(-5,3){3pt}{000}\cnode(-2,3){3pt}{001}
 \cnode(-5,0){3pt}{010}\cnode(-2,0){3pt}{011}

 \cnode(-4,2.15){3pt}{100}\cnode(-3,2.15){3pt}{101}
 \cnode(-4,0.85){3pt}{110}\cnode(-3,.85){3pt}{111}
\ncline{000}{010}\ncline{000}{001}\ncline{000}{100}
\ncline{011}{010}\ncline{011}{001}\ncline{011}{111}
\ncline{101}{111}\ncline{101}{001}\ncline{101}{100}
\ncline{110}{010}\ncline{110}{100}\ncline{110}{111}
 \rput(-5,3.3){\scriptsize$000$}\rput(-2,3.3){\scriptsize$001$}
 \rput(-5,-.3){\scriptsize$010$}\rput(-2,-.3){\scriptsize$011$}
 \rput(-4.4,2.0){\scriptsize $100$}\rput(-2.55,2.0){\scriptsize $101$}
 \rput(-4.45,1.){\scriptsize$110$}\rput(-2.55,1.){\scriptsize$111$}
 \rput(-3.5,-.5){\scriptsize $Q_3$}
 \rput(3.5,-4.){\scriptsize $Q_3\circledR C_3$}

 \cnode(-3.5,-1.5){3pt}{0}\cnode(-4.5,-3.3){3pt}{1}\cnode(-2.5,-3.3){3pt}{2}
 \ncline{0}{1}\ncline{0}{2}\ncline{2}{1}
 \rput(-3.8,-1.5){\scriptsize $0$}\rput(-4.75,-3.3){\scriptsize$1$}
 \rput(-2.25,-3.3){\scriptsize $2$} \rput(-3.5,-3.8){\scriptsize $C_3$}

\cnode(1.2,3.3){3pt}{0001}\cnode(5.8,3.3){3pt}{0011}
\cnode(0.2,2.3){3pt}{0002}\cnode(1.2,2.3){3pt}{0000}\cnode(5.8,2.3){3pt}{0010}\cnode(6.8,2.3){3pt}{0012}
\cnode(2,1.5){3pt}{1000}\cnode(3,1.5){3pt}{1001}\cnode(4,1.5){3pt}{1011}\cnode(5,1.5){3pt}{1010}
\cnode(2,0.5){3pt}{1002}\cnode(5,0.5){3pt}{1012}
\cnode(2,-0.5){3pt}{1102}\cnode(5,-0.5){3pt}{1112}
\cnode(2,-1.5){3pt}{1100}\cnode(3,-1.5){3pt}{1101}\cnode(4,-1.5){3pt}{1111}\cnode(5,-1.5){3pt}{1110}
\cnode(0.2,-2.3){3pt}{0102}\cnode(1.2,-2.3){3pt}{0100}\cnode(5.8,-2.3){3pt}{0110}\cnode(6.8,-2.3){3pt}{0112}
\cnode(1.2,-3.3){3pt}{0101}\cnode(5.8,-3.3){3pt}{0111}
\ncline{0001}{0011}\ncline{0101}{0111}
\ncline{0002}{0102}\ncline{0012}{0112}
\ncline{1001}{1011}\ncline{1101}{1111}
\ncline{1002}{1102}\ncline{1012}{1112}
\ncline{0000}{1000}\ncline{0010}{1010}
\ncline{0100}{1100}\ncline{0110}{1110}
\ncline{0000}{0001}\ncline{0000}{0002}\ncline{0001}{0002}
\ncline{0100}{0101}\ncline{0100}{0102}\ncline{0101}{0102}
\ncline{0010}{0011}\ncline{0010}{0012}\ncline{0011}{0012}
\ncline{0110}{0111}\ncline{0110}{0112}\ncline{0111}{0112}
\ncline{1000}{1001}\ncline{1000}{1002}\ncline{1001}{1002}
\ncline{1100}{1101}\ncline{1100}{1102}\ncline{1101}{1102}
\ncline{1010}{1011}\ncline{1010}{1012}\ncline{1011}{1012}
\ncline{1110}{1111}\ncline{1110}{1112}\ncline{1111}{1112}
 \rput(1.2,3.65){\scriptsize {(000,2)}}\rput(5.8,3.65){\scriptsize {(001,2)}}
 \rput(-0.5,2.35){\scriptsize {(000,1)}}\rput(1.9,2.35){\scriptsize {(000,0)}}
 \rput(5.1,2.35){\scriptsize {(001,0)}}\rput(7.5,2.35){\scriptsize {(001,1)}}
 \rput(1.3,1.45){\scriptsize {(100,0)}}\rput(2.8,1.85){\scriptsize {(100,2)}}
 \rput(4.2,1.85){\scriptsize {(101,2)}}\rput(5.7,1.45){\scriptsize {(101,0)}}
 \rput(1.3,0.5){\scriptsize {(100,1)}}\rput(5.7,0.5){\scriptsize {(101,1)}}
 \rput(1.3,-0.5){\scriptsize {(110,1)}}\rput(5.7,-0.5){\scriptsize {(111,1)}}
 \rput(1.3,-1.45){\scriptsize {(110,0)}}\rput(2.8,-1.85){\scriptsize {(110,2)}}
 \rput(4.2,-1.85){\scriptsize {(111,2)}}\rput(5.7,-1.45){\scriptsize {(111,0)}}
 \rput(-0.5,-2.35){\scriptsize {(010,1)}}\rput(1.9,-2.35){\scriptsize {(010,0)}}
 \rput(5.1,-2.35){\scriptsize {(011,0)}}\rput(7.5,-2.35){\scriptsize{(011,1)}}
 \rput(1.2,-3.65){\scriptsize {(010,2)}}\rput(5.8,-3.65){\scriptsize {(011,2)}}

\end{pspicture}
\caption{{\footnotesize The cube-connected cycle
$CCC(3)=Q_3\textregistered C_3$.}\label{f4}}
\end{center}
\end{figure}

For simplicity, when a replacement product graph $G_1\textregistered
G_2$ is mentioned, if no otherwise specified, we always assume that
$G_1$ is a $\delta_1$-regular graph with $n$ vertices and $G_2$ is a
$\delta_2$-regular graph with $\delta_1$ vertices. Moreover, we
simply write $\kappa_i=\kappa(G_i)$, $\lambda_i=\lambda(G_i)$ and
$\delta_i=\delta(G_i)$ for each $i=1,2$, and write $xG_2$ for
$\{x\}\times G_2$ for any $x\in V(G_1)$, and let
$I_n=\{1,2,\dots,n\}$.

In this paper, we also need some notations. For a subset $X\subset
V(G)$, use $G[X]$ to denote the subgraph of $G$ induced by $X$. For
two disjoint subsets $X$ and $Y$ in $V(G)$, use $[X,Y]$ to denote
the set of edges between $X$ and $Y$ in $G$. In particular,
$E_G(X)=[X,\overline{X}]$ and let $d_G(X)=|E_G(X)|$, where
$\overline{X} =V(G)\setminus X$.

For a $\lambda'$-connected graph $G$, there is certainly a subset
$X\subset V(G)$ with $|X|\geqslant 2$ such that $E_G(X)$ is a
$\lambda'$-cut and, both $G[X]$ and $G[\overline{X}]$ are connected.
Such an $X$ is called a {\it $\lambda'$-fragment} of $G$. A
$\lambda'$-fragment $X$ of $G$ with minimum cardinality is called a
{\it $\lambda'$-atom} of $G$. The $\lambda'$-atom has been
successfully used in the study of restricted edge-connectivity of
graphs (see, for instance, \cite{m03,oz05,uv03,xx02}).

\section{Edge-connectivity of $G_1\textregistered G_2$}

In this section, we investigate the edge-connectivity of replacement
product graph $G_1\textregistered G_2$. By Definition~\ref{Def2.4},
it is easy to see that if $G_1$ and $G_2$ are connected, then
$G_1\textregistered G_2$ is also connected. We now establish the
upper and lower bounds on the edge-connectivity for replacement
product graphs.

\begin{theorem}\label{thm3.1}\quad
If both $G_1$ and $G_2$ are connected, then
\begin{equation}\label{e3.1}
\min\{\lambda_1,\lambda_2\}\leqslant \lambda(G_1\textregistered
G_2)\leqslant \min\{\lambda_1,\delta_2+1\}.
\end{equation}
Furthermore,
\begin{equation}\label{e3.2}
\min\{\lambda_1,\lambda_2+1\}\leqslant \lambda(G_1\textregistered
G_2)\ \ \text{if $\kappa_1\geqslant 2$}.
\end{equation}
\end{theorem}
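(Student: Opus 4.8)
The plan is to exploit the product structure of $G:=G_1\textregistered G_2$ and to translate edge-cuts of $G$ into edge-cuts either of $G_1$ or of $G_2$. By Proposition~\ref{prop2.5}, $V(G)$ is partitioned into copies $xG_2$ ($x\in V(G_1)$), each inducing a subgraph isomorphic to $G_2$; call an edge of $G$ lying inside one copy an \emph{internal} edge. By Definition~\ref{Def2.4}, every vertex $(x,i)$ is incident with exactly one edge leaving its copy, namely the one associated with the $G_1$-edge $e_x^i$; call these the \emph{external} edges. Hence each edge $xy\in E(G_1)$ gives exactly one external edge of $G$, joining $xG_2$ to $yG_2$, so contracting each copy to a point turns the external edges into a copy of $G_1$. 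For a set $W$ with $\emptyset\neq W\subsetneq V(G)$ write $W_x=W\cap(xG_2)$ and partition $V(G_1)=A\cup B\cup D$ with $A=\{x:\emptyset\neq W_x\subsetneq xG_2\}$, $B=\{x:W_x=xG_2\}$, $D=\{x:W_x=\emptyset\}$. Since internal and external edges are disjoint,
\[
d_G(W)=\sum_{x\in A}d_{G[xG_2]}(W_x)\;+\;t(W),
\]
where $t(W)$ is the number of external edges crossing $W$; moreover $d_{G[xG_2]}(W_x)\geqslant\lambda(G_2)=\lambda_2$ for each $x\in A$ because $G[xG_2]\cong G_2$. Throughout I will use $\lambda(G)=\min\{d_G(W):\emptyset\neq W\subsetneq V(G)\}$.

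For (3.1): $\lambda(G)\leqslant\delta(G)=\delta_2+1$ by Whitney's inequality, and taking $W=\bigcup_{x\in A^*}xG_2$ for a side $A^*$ of a minimum edge-cut of $G_1$ gives $t(W)=\lambda_1$ and $A=\emptyset$, so $d_G(W)=\lambda_1$; hence $\lambda(G)\leqslant\min\{\lambda_1,\delta_2+1\}$. For the lower bound, let $W$ realise $\lambda(G)$. If $A\neq\emptyset$, pick $x\in A$ and the displayed identity gives $d_G(W)\geqslant d_{G[xG_2]}(W_x)\geqslant\lambda_2$. If $A=\emptyset$, then $W=\bigcup_{x\in B}xG_2$ with $\emptyset\neq B\subsetneq V(G_1)$, and an external edge of $xy\in E(G_1)$ crosses $W$ exactly when exactly one of $x,y$ lies in $B$, so $d_G(W)=t(W)=d_{G_1}(B)\geqslant\lambda_1$. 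Either way $d_G(W)\geqslant\min\{\lambda_1,\lambda_2\}$.

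For (3.2), assume $\kappa_1\geqslant2$ and let $W$ realise $\lambda(G)$; I must sharpen the bound to $\min\{\lambda_1,\lambda_2+1\}$. If $A=\emptyset$ the previous paragraph already gives $d_G(W)\geqslant\lambda_1$. If $|A|\geqslant2$, the sum in the identity has at least two terms, so $d_G(W)\geqslant2\lambda_2\geqslant\lambda_2+1$. So assume $A=\{x\}$; if $d_{G[xG_2]}(W_x)\geqslant\lambda_2+1$ or $t(W)\geqslant1$ we again get $d_G(W)\geqslant\lambda_2+1$. It remains to exclude the case $A=\{x\}$, $d_{G[xG_2]}(W_x)=\lambda_2$, $t(W)=0$. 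From $t(W)=0$ no external edge joins a $B$-copy to a $D$-copy, so $B$ and $D$ are unions of components of $G_1-x$; as $\kappa_1\geqslant2$, $G_1-x$ is connected, whence $B=\emptyset$ or $D=\emptyset$. If $D=\emptyset$, every vertex $(x,i)$ has its unique external edge running into a copy of $B\subseteq W$, so $t(W)=0$ forces $(x,i)\in W$ for all $i$, i.e. $W_x=xG_2$, contradicting $x\in A$; the case $B=\emptyset$ is symmetric (run the argument on $\overline W$, which has the same value $d_G(\overline W)=\lambda(G)$ and the same $t$). This contradiction establishes $d_G(W)\geqslant\min\{\lambda_1,\lambda_2+1\}$.

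I expect the only genuine obstacle to be the bookkeeping in this last case of (3.2): the essentially new ingredient, beyond the decomposition identity for $d_G(W)$, is the remark that when exactly one copy is split and no external edge is cut, the split must respect the components of $G_1-x$, which $\kappa_1\geqslant2$ forbids. Everything else is a routine translation between cuts of $G$, $G_1$ and $G_2$; the regularity of $G_1$ and $G_2$ and the one-to-one correspondence between $E(G_1)$ and the external edges of $G$ are what make the identity exact.
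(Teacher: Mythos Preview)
Your proof is correct. The treatment of \eqref{e3.1} is essentially identical to the paper's: both prove the upper bound by lifting a minimum edge-cut of $G_1$ and invoking Whitney, and the lower bound by splitting into the two cases ``some copy is cut'' versus ``no copy is cut''.

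For \eqref{e3.2} you take a genuinely different route. The paper fixes a split copy $V_x$, picks $(x,i)\in X$ and $(x,j)\in\overline X$, invokes the edge version of Menger's theorem to get $\lambda_2$ edge-disjoint $(x,i)$--$(x,j)$ paths inside $V_x$, and then manufactures one extra edge-disjoint path by stepping out along the external edges at $(x,i)$ and $(x,j)$ and using $\kappa_1\geqslant 2$ to route through $G_1-x$; the $\lambda_2+1$ paths force $|F|\geqslant\lambda_2+1$. You instead stay with your cut identity $d_G(W)=\sum_{x\in A}d_{G[xG_2]}(W_x)+t(W)$ and argue by cases on $|A|$, the new content being that $|A|=1$ with $t(W)=0$ is impossible because it would force $B$ and $D$ to be unions of components of $G_1-x$, contradicting $\kappa_1\geqslant 2$. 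Both arguments hinge on the same fact (connectedness of $G_1-x$), but yours avoids Menger and gives slightly more, since the case $|A|\geqslant 2$ already yields $d_G(W)\geqslant 2\lambda_2$ --- a bound the paper only extracts later in Lemma~\ref{lem4.3}. The paper's path-building argument is a bit shorter and more constructive; your counting argument is more self-contained and dovetails directly with the decomposition you set up.
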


\begin{proof}\quad
Let $G=G_1\textregistered G_2$. Clearly,
 \begin{equation}\label{e3.3}
\lambda(G)\leqslant \delta(G)=\delta_2+1.
 \end{equation}

Let $S\subset V(G_1)$ and $E_{G_1}(S)$ be a $\lambda_1$-cut of
$G_1$, and $T=\{(x,i):\ x\in S,\ i\in V(G_2)\}$. Then $E_G(T)$ is an
edge-cut of $G$. Since there is an edge $xy$ in $G_1$ if and only if
there is exactly one edge between $V(xG_2)$ and $V(yG_2)$ in $G$,
$xy\in E_{G_1}(S)$ if and only if there are two vertices $i$ and $j$
of $G_2$ such that $((x,i),(y,j))\in E_G(T)$. Therefore,
$|E_G(T)|=|E_{G_1}(S)|=\lambda_1$ and
 \begin{equation}\label{e3.4}
 \lambda(G)\leqslant |E_G(T)|=\lambda_1.
 \end{equation}

Combining (\ref{e3.3}) with (\ref{e3.4}), we establish the upper
bound on $\lambda(G_1\textregistered G_2)$ in (\ref{e3.1}). We now
show the lower bound in (\ref{e3.1}).

Let $F$ be a $\lambda$-cut in $G$. Then there are two
$\lambda$-fragments associated with $F$ in $G$, say, $X$ and
$\overline{X}$. Let $\{V_1,V_2,\dots,V_n\}$ be a partition of $V(G)$
satisfied property in Proposition~\ref{prop2.5}.

Assume for each $i\in I_n$, either $V_i\subset X$ or $V_i\subset
\overline{X}$. Let $Y=\{i : V_i\subset X, i\in I_n\}$. Then
$Y\subset V(G_1)$, $E_{G_1}(Y)$ is an edge-cut of $G_1$ and
$|E_{G_1}(Y)|=|F|$, and so
\begin{equation}\label{e3.5}
\lambda(G)=|F|=|E_{G_1}(Y)|\geqslant \lambda_1.
 \end{equation}

Assume now that there exists some $i\in I_n$ such that $V_i\cap
X\neq \emptyset$ and $V_i\cap \overline{X}\neq \emptyset$. Then
 \begin{equation}\label{e3.6}
\lambda(G)=|F|\geqslant |[V_i\cap X,V_i\cap \overline{X}]|\geqslant
\lambda(G[V_i])=\lambda(G_2)=\lambda_2.
\end{equation}

Combining (\ref{e3.5}) with (\ref{e3.6}), we establish the lower
bound on $\lambda(G_1\textregistered G_2)$ in (\ref{e3.1}).


To prove (\ref{e3.2}), let $(x,i)$ be any vertex of $V_x\cap X$ and
$(x,j)$ be any vertex of $V_x\cap \overline{X}$. Since $G[V_x]\cong
G_2$ and $G_2$ is $\lambda_2$-connected, there exist $\lambda_2$
edge-disjoint paths $P_1,P_2,\dots,P_{\lambda_2}$ between $(x,i)$
and $(x,j)$ in $G[V_x]\subset G$. Let $(y,k)\in N_G((x,i))$ and
$(z,\ell)\in N_G((x,j))$, where $\{y,z\}\subseteq N_{G_1}(x)$. Since
$\kappa_1\geqslant 2$, there exist at least two internally
vertex-disjoint paths between $y$ and $z$ in $G_1$, one of them
avoids $x$. By the connectedness of $G_2$, there exists a path $Q$
between $(y,k)$ and $(z,\ell)$ in $G$ that avoids the vertices of
$V_x$. Let $P_0=\langle(x,i),Q,(x,j)\rangle$. Thus,
$P_0,P_1,P_2,\dots,P_{\lambda_2}$ are $\lambda_2+1$ edge-disjoint
paths between $(x,i)$ and $(x,j)$. Since $(x,i)\in V_x\cap X$ and
$(x,j)\in V_x\cap \overline{X}$, it is easy to find $|E(P_i)\cap
F|\geqslant 1$ for each $i\in \{0,1,2,\dots,\lambda_2\}$ and so
$$
\lambda(G)=|F|\geqslant \lambda_2+1
$$
as required.
\end{proof}

Combining the Whitney's inequality $\kappa(G)\leqslant
\lambda(G)\leqslant \delta(G)$ with Theorem~\ref{thm3.1}, we obtain
the following results immediately.

\begin{corollary}\label{cor3.2}\quad
Suppose that both $G_1$ and $G_2$ are connected. Then

{\rm (a)}\ $\lambda(G_1\textregistered G_2)=1$ if $\lambda_1=1$;

{\rm (b)}\ $\lambda(G_1\textregistered G_2)=\lambda_1$ if
$\lambda_2\geqslant \lambda_1$;

{\rm (c)}\ $\lambda(G_1\textregistered G_2)=\lambda_1$ if
$\kappa_1\geqslant 2$ and $\lambda_2\geqslant \lambda_1-1$;

{\rm (d)}\ $\lambda(G_1\textregistered
G_2)=\min\{\lambda_1,\delta_2+1\}$ if $\kappa_1\geqslant 2$ and
$\lambda_2=\delta_2$.
\end{corollary}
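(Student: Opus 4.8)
The plan is to obtain all four statements as direct consequences of the two-sided estimates in Theorem~\ref{thm3.1} combined with the Whitney inequality $\kappa_i\leqslant\lambda_i\leqslant\delta_i$; no separate combinatorial argument is required. Throughout, write $\lambda=\lambda(G_1\textregistered G_2)$ and recall from (\ref{e3.1}) that $\min\{\lambda_1,\lambda_2\}\leqslant\lambda\leqslant\min\{\lambda_1,\delta_2+1\}$, and from (\ref{e3.2}) that $\lambda\geqslant\min\{\lambda_1,\lambda_2+1\}$ once $\kappa_1\geqslant 2$.

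For (a), the hypothesis $\lambda_1=1$ collapses the upper bound in (\ref{e3.1}) to $\lambda\leqslant\min\{1,\delta_2+1\}=1$, while $\lambda\geqslant 1$ holds because $G_1\textregistered G_2$ is connected. For (b)--(d) the common idea is to show that, under the stated hypothesis, the relevant outer bounds in (\ref{e3.1})--(\ref{e3.2}) equal the same quantity, so that $\lambda$ is pinned down. In (b), the hypothesis $\lambda_2\geqslant\lambda_1$ makes the lower bound in (\ref{e3.1}) equal to $\lambda_1$, and Whitney gives $\delta_2+1\geqslant\lambda_2+1>\lambda_1$, so the upper bound also equals $\lambda_1$. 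In (c), $\kappa_1\geqslant 2$ lets us invoke (\ref{e3.2}); the hypothesis $\lambda_2\geqslant\lambda_1-1$ turns that lower bound into $\lambda_1$, and $\delta_2\geqslant\lambda_2\geqslant\lambda_1-1$ makes the upper bound in (\ref{e3.1}) equal to $\lambda_1$ as well. In (d), $\kappa_1\geqslant 2$ again gives (\ref{e3.2}), and $\lambda_2=\delta_2$ turns its right-hand side into $\min\{\lambda_1,\delta_2+1\}$, which is exactly the upper bound in (\ref{e3.1}); the two bounds coincide, yielding the claimed value.

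The only place that needs a moment's thought is the step used in (b) and (c): translating a hypothesis stated in terms of $\lambda_2$ into the inequality $\delta_2+1\geqslant\lambda_1$ needed to remove $\delta_2+1$ from the minimum on the upper side. This is immediate from $\lambda_2\leqslant\delta_2$, so in fact the corollary presents no real obstacle beyond a tidy bookkeeping of the four cases.
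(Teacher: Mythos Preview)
Your proof is correct and follows exactly the approach the paper indicates: deduce each part directly from the bounds of Theorem~\ref{thm3.1} together with the Whitney inequality $\kappa\leqslant\lambda\leqslant\delta$. The paper itself does not spell out the four cases, merely stating that the corollary follows immediately from these two ingredients, and your case-by-case verification fills in precisely those routine details.
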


\begin{lemma}\label{lem3.3}\quad
$\lambda(G)\leqslant \frac{1}{2}\Delta(G)$ for any connected graph
that contains cut-vertices.
\end{lemma}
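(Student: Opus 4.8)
The plan is to exploit the cut-vertex directly: if $v$ is a cut-vertex of a connected graph $G$, then $G-v$ has at least two components, so there is a partition $V(G)\setminus\{v\}=A\cup B$ with $A,B\neq\emptyset$, no edges between $A$ and $B$, and every edge leaving $A$ (resp.\ $B$) going to its complement in $V(G)$ actually goes to $v$. First I would bound $\lambda(G)$ by the size of the edge-cut $E_G(A)$, which consists precisely of edges joining $A$ to $v$. Since these are all incident with $v$, we get $|E_G(A)|\leqslant d_G(v)$; but we can do better by also using the cut $E_G(B)$, whose edges are all incident with $v$ too, and $E_G(A)$ and $E_G(B)$ are disjoint subsets of the $d_G(v)$ edges at $v$. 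Hence $|E_G(A)|+|E_G(B)|\leqslant d_G(v)\leqslant \Delta(G)$, so $\min\{|E_G(A)|,|E_G(B)|\}\leqslant \tfrac12\Delta(G)$.

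The remaining step is to check that at least one of $E_G(A)$, $E_G(B)$ is a genuine edge-cut so that $\lambda(G)$ is bounded by its size. This is immediate: removing $E_G(A)$ disconnects $A$ from the rest of $G$ (there are no $A$–$B$ edges and the $A$–$v$ edges have been deleted), and $A\neq\emptyset$ while $\overline A\ni v$ is also nonempty, so $E_G(A)$ is an edge-cut; likewise for $E_G(B)$. Therefore
$$
\lambda(G)\leqslant \min\{|E_G(A)|,|E_G(B)|\}\leqslant \tfrac12\,d_G(v)\leqslant \tfrac12\Delta(G),
$$
which is the claimed inequality.

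There is essentially no obstacle here; the only point requiring a word of care is the disjointness of $E_G(A)$ and $E_G(B)$ as edge sets at $v$, which holds because an edge from $v$ has its other endpoint in exactly one of $A$ or $B$ (it cannot land in $A$ and $B$ simultaneously, and it cannot be a loop since $G$ is simple). One could alternatively phrase the argument by writing $G-v$ as a disjoint union of its components $C_1,\dots,C_k$ with $k\geqslant 2$, noting $\sum_{i}|[C_i,\{v\}]| = d_G(v)$, and picking the component $C_j$ minimizing $|[C_j,\{v\}]|$, so that $|[C_j,\{v\}]|\leqslant d_G(v)/k\leqslant d_G(v)/2$; this version makes the factor $\tfrac12$ (and its sharpness when $k=2$ and the two cuts have equal size) transparent.
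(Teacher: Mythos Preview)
Your proof is correct and follows essentially the same idea as the paper: the paper's proof takes a cut-vertex $x$, notes that $G-x$ has $k\geqslant 2$ components, and concludes $\lambda(G)\leqslant \frac{1}{k}|N(x)|\leqslant \frac{1}{2}\Delta(G)$, which is exactly the ``alternative phrasing'' you give at the end. Your first version with the bipartition $A\cup B$ is just the case $k=2$ (grouping components if necessary) spelled out in more detail.
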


\begin{proof}\quad
Suppose that $x$ is a cut-vertex of $G$ and $G-x$ has $k$
components, where $k\geqslant 2$. Then $\lambda(G)\leqslant
\frac{1}{k}|N(x)|\leqslant \frac{1}{2}\Delta(G)$.
\end{proof}

\begin{corollary}\label{cor3.4}\quad
$\lambda(G\textregistered K_n)=\lambda(G)$ for any $n$-regular
connected graph $G$.
\end{corollary}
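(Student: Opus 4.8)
The plan is to deduce Corollary~\ref{cor3.4} from Theorem~\ref{thm3.1}, Corollary~\ref{cor3.2}, and Lemma~\ref{lem3.3}, splitting into cases according to the connectivity of the $n$-regular connected graph $G$. Write $H=G\textregistered K_n$; since $G$ is $n$-regular, $K_n$ plays the role of $G_2$, so here $\delta_1=n$, $\delta_2=n-1$, $\lambda_2=\lambda(K_n)=n-1$, and $H$ is $n$-regular by Proposition~\ref{prop2.5}. Note also $\lambda_1=\lambda(G)\leqslant\delta(G)=n=\delta_2+1$ by Whitney's inequality, so $\min\{\lambda_1,\delta_2+1\}=\lambda_1=\lambda(G)$; thus the upper bound in (\ref{e3.1}) already gives $\lambda(H)\leqslant\lambda(G)$, and it remains only to prove the matching lower bound $\lambda(H)\geqslant\lambda(G)$.

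First I would dispose of the easy cases. If $\lambda(G)=1$, then Corollary~\ref{cor3.2}(a) gives $\lambda(H)=1=\lambda(G)$ at once. If $\kappa(G)\geqslant 2$, then since $\lambda_2=n-1\geqslant\lambda_1-1$ (as $\lambda_1\leqslant n$), Corollary~\ref{cor3.2}(c) applies and yields $\lambda(H)=\lambda_1=\lambda(G)$. The remaining case is $\kappa(G)=1$ together with $\lambda(G)\geqslant 2$ — that is, $G$ has a cut-vertex but is not $\lambda$-trivial. Here Lemma~\ref{lem3.3} applies to $G$: since $\Delta(G)=n$, we get $\lambda(G)\leqslant n/2$, equivalently $\lambda(G)+1\leqslant n/2+1\leqslant n-1=\lambda_2$ once $n\geqslant 4$ (the small cases $n\leqslant 3$ force $\lambda(G)\leqslant 1$, already handled); so in particular $\lambda_2\geqslant\lambda_1$, and Corollary~\ref{cor3.2}(b) gives $\lambda(H)=\lambda_1=\lambda(G)$.

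Assembling the cases: in every instance $\lambda(G\textregistered K_n)=\lambda(G)$, which is the claim. The one place to be slightly careful is the low-degree bookkeeping in the $\kappa(G)=1$ case — one must check that $\lambda_2=n-1$ really dominates $\lambda_1$, which is where Lemma~\ref{lem3.3} (forcing $\lambda(G)\leqslant n/2$) does the work and why the cut-vertex hypothesis is exactly what is needed; for $n=1,2,3$ a connected $n$-regular graph with a cut-vertex has $\lambda(G)=1$ anyway, so no gap appears. I do not expect any genuine obstacle: the corollary is essentially an exercise in choosing the right part of Corollary~\ref{cor3.2} in each connectivity regime, with Lemma~\ref{lem3.3} supplying the inequality that makes part~(b) applicable when $G$ is only $1$-connected.
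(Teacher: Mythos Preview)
Your proof is correct and follows essentially the same route as the paper's: split on whether $\kappa(G)\geqslant 2$ (where Theorem~\ref{thm3.1}/Corollary~\ref{cor3.2} gives equality directly since $\lambda_2+1=n\geqslant\lambda_1$) or $\kappa(G)=1$ (where Lemma~\ref{lem3.3} forces $\lambda(G)\leqslant n/2\leqslant n-1=\lambda_2$, so Corollary~\ref{cor3.2}(b) applies). Your separate treatment of $\lambda(G)=1$ and the small-$n$ bookkeeping are harmless but unnecessary---the paper simply observes $\lambda(G)\leqslant n/2$ implies $\lambda(K_n)=n-1\geqslant\lambda(G)$ in one stroke.
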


\begin{proof}\quad
Clearly, $\lambda(G)\leqslant \delta(G)= n$ and
$\lambda(K_n)=\delta(K_n)=n-1$. If $\kappa(G)\geqslant 2$, then
$\lambda(G\textregistered K_n)=\lambda(G)$ by Theorem~\ref{thm3.1}.
If $\kappa(G)=1$, by Lemma~\ref{lem3.3}, then $\lambda(G)\leqslant
\frac{n}{2}<n$ and so $\lambda(K_n)\geqslant \lambda(G)$. By
Corollary~\ref{cor3.2} (b), the result follows.
\end{proof}

\begin{corollary}\label{cor3.5}\quad
$\lambda(G\textregistered C_n)=\min\{\lambda(G),3\}$ for any
2-connected $n$-regular graph $G$.
\end{corollary}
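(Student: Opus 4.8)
The plan is to observe that Corollary~\ref{cor3.5} is an immediate specialization of the bounds in Theorem~\ref{thm3.1}. I would set $G_1=G$ and $G_2=C_n$. Since $G$ is $n$-regular, the cycle $C_n$ has exactly $\delta_1=n$ vertices, so it is an admissible second factor for the replacement product, and $C_n$ is $2$-regular, i.e. $\delta_2=2$; hence $G\textregistered C_n$ is $3$-regular by Proposition~\ref{prop2.5}. The two facts that make the upper and lower bounds coincide are: a cycle satisfies $\lambda_2=\lambda(C_n)=2=\delta_2$, and, because $G$ is $2$-connected, $\kappa_1=\kappa(G)\geqslant 2$, so inequality~(\ref{e3.2}) is available.

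Then I would simply chain the two estimates of Theorem~\ref{thm3.1}. Combining~(\ref{e3.1}) with~(\ref{e3.2}) gives
$$\min\{\lambda_1,\lambda_2+1\}\leqslant \lambda(G\textregistered C_n)\leqslant \min\{\lambda_1,\delta_2+1\}.$$
Since $\lambda_2=\delta_2=2$, the two outer terms are both equal to $\min\{\lambda_1,3\}=\min\{\lambda(G),3\}$, which forces $\lambda(G\textregistered C_n)=\min\{\lambda(G),3\}$. Equivalently, this is precisely Corollary~\ref{cor3.2}(d) with $G_2=C_n$, the hypothesis $\lambda_2=\delta_2$ being automatic for a cycle.

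There is essentially no obstacle here — the argument is a one-line deduction from Theorem~\ref{thm3.1}. The only point worth recording is the degenerate case: one should assume $n\geqslant 3$ so that $C_n$ is a genuine simple $2$-regular graph on $n$ vertices (for $n\leqslant 2$ the replacement product is not defined in the sense of Definition~\ref{Def2.4}), and one uses the $2$-connectedness of $G$ solely to invoke~(\ref{e3.2}), which is exactly what upgrades the trivial lower bound $\min\{\lambda_1,2\}$ to the needed $\min\{\lambda_1,3\}$.
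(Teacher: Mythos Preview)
Your proposal is correct and matches the paper's approach: Corollary~\ref{cor3.5} is stated in the paper without a separate proof, as an immediate specialization of Corollary~\ref{cor3.2}\,(d) (equivalently, of the two bounds in Theorem~\ref{thm3.1}) with $G_2=C_n$, using $\lambda(C_n)=\delta(C_n)=2$ and the hypothesis $\kappa(G)\geqslant 2$. Your remark about needing $n\geqslant 3$ is a sensible side observation but is not part of the paper's argument.
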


\begin{example}\label{exa3.6}\quad
$\lambda(K_4\textregistered C_3)=\lambda(K_4)=3$, and

$\lambda(CCC_n)=\lambda(Q_n\textregistered
C_n)=\min\{\lambda(Q_n),3\}=\min\{n,3\}=3$ if $n\geqslant 3$.
\end{example}

\begin{remark}\label{rem3.7}\quad
{\rm We conclude this section with a remark on Theorem~\ref{thm3.1}.
The condition ``$\kappa_1\geqslant 2$" in (\ref{e3.2}) is necessary.
For example, two graphs $G_1$ and $G_2$ are shown in
Figure~\ref{f5}. It is easy to see that $\kappa_1=1$, $\lambda_1=4$,
and $\lambda_2=\delta_2=2$, $G_1\textregistered G_2$ is 3-regular,
and
$$
\lambda(G_1\textregistered
G_2)=2<\min\{4,3\}=\min\{\lambda_1,\lambda_2+1\},
$$
which contradicts to the lower bound on $\lambda(G_1\textregistered
G_2)$ given in (\ref{e3.2}).
 }
 \end{remark}

\begin{figure}[h]
\begin{center}
\begin{pspicture}(-.4,-1.5)(2.5,1.4)
\rput{20}{%
\SpecialCoor\degrees[9]
  \multido{\i=0+1}{9}{\cnode(1.5;\i){.11}{1\i}}
 \ncline{10}{18}\ncline{11}{12}\ncline{12}{13}\ncline{13}{14}\ncline{14}{15}\ncline{15}{16}\ncline{16}{17}
 \ncline{10}{13}\ncline{10}{14}\ncline{10}{15}\ncline{10}{16}\ncline{10}{17}
 \ncline{11}{13}\ncline{11}{14}\ncline{11}{15}\ncline{11}{16}\ncline{11}{17}\ncline{11}{18}
 \ncline{12}{14}\ncline{12}{15}\ncline{12}{16}\ncline{12}{17}\ncline{12}{18}\ncline{12}{10}
 \ncline{13}{15}\ncline{13}{16}\ncline{13}{17}\ncline{13}{18}
 \ncline{14}{16}\ncline{14}{17}\ncline{14}{18}
 \ncline{15}{17}\ncline{15}{18}\ncline{16}{18}
 }%
 \end{pspicture}
\begin{pspicture}(-2.,-1.5)(2,1.4)
\rput{0}{%
\SpecialCoor\degrees[9]
 \multido{\i=0+1}{9}{\cnode(1.5;\i){.11}{2\i}}
 \ncline{20}{21}\ncline{21}{22}\ncline{22}{23}
 \ncline{24}{25}\ncline{26}{27}\ncline{27}{28}\ncline{28}{20}
 \ncline{20}{23}\ncline{20}{24}\ncline{20}{25}\ncline{20}{26}\ncline{20}{27}
 \ncline{21}{23}\ncline{21}{24}\ncline{21}{25}\ncline{21}{26}\ncline{21}{27}\ncline{21}{28}
 \ncline{22}{24}\ncline{22}{25}\ncline{22}{26}\ncline{22}{27}\ncline{22}{28}\ncline{22}{20}
 \ncline{23}{25}\ncline{23}{26}\ncline{23}{27}\ncline{23}{28}
 \ncline{24}{26}\ncline{24}{27}\ncline{24}{28}
 \ncline{25}{27}\ncline{25}{28}\ncline{26}{28}
 }%
 \cnode(-2.3,0){.11}{x}\rput(-2.3,-.3){\scriptsize$x$}
 \ncline{11}{x}\ncline{10}{x}\ncline{17}{x}\ncline{18}{x}
 \ncline{23}{x}\ncline{24}{x}\ncline{25}{x}\ncline{26}{x}
 \rput(-3.,.9){\scriptsize$e^1_x$}\rput(-1.6,.9){\scriptsize$e^8_x$}
 \rput(-3.,.15){\scriptsize$e^2_x$}\rput(-1.6,.15){\scriptsize$e^7_x$}
 \rput(-2.8,-.05){\scriptsize$e^3_x$}\rput(-1.7,-.15){\scriptsize$e^6_x$}
 \rput(-3.,-.9){\scriptsize$e^4_x$}\rput(-1.6,-.9){\scriptsize$e^5_x$}
 \rput(2.6,0){\scriptsize$\circledR$}
 \rput(-2.3,-1.5){\scriptsize$G_1$}\rput(4.8,-1.5){\scriptsize$G_2$}
 \end{pspicture}
\begin{pspicture}(-2.5,-1.5)(1,1.4)
\rput{23}{%
\SpecialCoor\degrees[8]
 \multido{\i=0+1}{8}{\cnode(1.;\i){.11}{3\i}}
 \ncline{30}{31}\ncline{31}{32}\ncline{32}{33}\ncline{33}{34}\ncline{34}{35}\ncline{35}{36}\ncline{36}{37}\ncline{37}{30}
 }%
 \rput(-0.5,1.2){\scriptsize$1$}\rput(0.5,1.2){\scriptsize$8$}
 \rput(-1.15,0.5){\scriptsize$2$}\rput(1.15,0.5){\scriptsize$7$}
 \rput(-1.15,-0.5){\scriptsize$3$}\rput(1.15,-0.5){\scriptsize$6$}
 \rput(-0.5,-1.2){\scriptsize$4$}\rput(0.5,-1.2){\scriptsize$5$}
\end{pspicture}

\caption{\label{f5}\footnotesize {Two graphs $G_1$ and $G_2$ in
Remark~\ref{rem3.7}.}}
\end{center}
\end{figure}

\section{Restricted edge-connectivity of $G_1\textregistered G_2$}

In this section, we investigate the restricted edge-connectivity of
the replacement product of two regular graphs.

\begin{theorem}\label{thm4.1}\quad
If both $G_1$ and $G_2$ are connected, then
$$
\lambda'(G_1\textregistered
G_2)\leqslant\min\{\lambda_1,2\delta_2\}.
$$
\end{theorem}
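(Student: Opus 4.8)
The plan is to produce, for $G=G_1\textregistered G_2$, two restricted edge-cuts of sizes $2\delta_2$ and $\lambda_1$ respectively, and then take the minimum. Both constructions rest on the structural description of $G$ in Proposition~\ref{prop2.5}: $G$ is $(\delta_2+1)$-regular, and $V(G)$ partitions as $\{V_1,\dots,V_n\}$ with $G[V_x]\cong G_2$ for each $x\in V(G_1)$. Note first that since $G_1,G_2$ are connected and $\lambda'(G)$ is assumed to exist (so $|V(G)|=n\delta_1\geqslant 4$), we have $\delta_1\geqslant 2$ and hence $\delta_2\geqslant 1$; the degenerate small cases that would violate this are excluded by the hypotheses.

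For the bound $\lambda'(G)\leqslant 2\delta_2$: since $G$ is $(\delta_2+1)$-regular, every edge of $G$ has edge-degree $2(\delta_2+1)-2=2\delta_2$, so $\xi(G)=2\delta_2$; as $G$ is regular of degree $\delta_2+1\geqslant 2$ it is not a star, so Theorem~\ref{thm1.1} gives $\lambda'(G)\leqslant\xi(G)=2\delta_2$. (Concretely this corresponds to the cut $E_G(W)$ with $W=\{(x,i),(x,j)\}$ for any edge $ij\in E(G_2)$ and any $x\in V(G_1)$: then $|E_G(W)|=2\delta_2$ and $G[W]\cong K_2$, and one checks from the regularity of $G_2$ that removing $E_G(W)$ isolates no vertex of $\overline W$.) For the bound $\lambda'(G)\leqslant\lambda_1$: let $S\subset V(G_1)$ with $E_{G_1}(S)$ a $\lambda_1$-cut, and set $T=\bigcup_{x\in S}V_x$. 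As noted in the proof of Theorem~\ref{thm3.1}, the edges of $G$ between $V_x$ and $V_y$ are in bijection with the edges $xy$ of $G_1$, so $|E_G(T)|=|E_{G_1}(S)|=\lambda_1$ and $G-E_G(T)$ is disconnected with $T,\overline T$ in different components; moreover $G[T]$ contains the disjoint copies $G[V_x]\cong G_2$ for $x\in S$, and $G[\overline T]$ the copies with $x\notin S$, so, since $\delta(G_2)=\delta_2\geqslant 1$, neither side of the cut has an isolated vertex. Hence $E_G(T)$ is a restricted edge-cut and $\lambda'(G)\leqslant\lambda_1$. Combining, $\lambda'(G_1\textregistered G_2)\leqslant\min\{\lambda_1,2\delta_2\}$.

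The substantive part in each case is checking the two defining properties of a restricted edge-cut — that $G$ minus the chosen edge set is disconnected and has no isolated vertex. For the $\lambda_1$-cut both follow at once from Proposition~\ref{prop2.5} together with $\delta_2\geqslant 1$. For the $2\delta_2$-cut, disconnection is immediate (a $K_2$ becomes a component), and the only place where a little care is needed is verifying that no vertex outside $W$ becomes isolated, which uses the regularity of $G_2$; this is precisely what gets subsumed by the clean appeal to Theorem~\ref{thm1.1}, so I expect no genuine obstacle beyond bookkeeping and the exclusion of the trivial parameter ranges.
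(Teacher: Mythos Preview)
Your proposal is correct and follows essentially the same approach as the paper: the bound $\lambda'(G)\leqslant 2\delta_2$ comes directly from Theorem~\ref{thm1.1} with $\xi(G)=2\delta_2$, and the bound $\lambda'(G)\leqslant\lambda_1$ comes from lifting a $\lambda_1$-cut of $G_1$ to a restricted edge-cut of $G$ via the block partition of Proposition~\ref{prop2.5}. The paper's version additionally remarks that $G[Y]$ and $G[\overline{Y}]$ are connected (not just free of isolated vertices), but this is more than the definition of restricted edge-cut requires, so your argument via $\delta_2\geqslant 1$ is sufficient.
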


\begin{proof}\quad
Let $G=G_1\textregistered G_2$. Since $G$ is $(\delta_2+1)$-regular
and $\delta_2+1\geqslant 2$, it is easy to see that $G$ is
$\lambda'$-connected. By Theorem~\ref{thm1.1}.
 \begin{equation}\label{e4.1}
\lambda'(G) \leqslant \xi(G)=2\delta_2.
 \end{equation}
Let $X\subset V(G_1)$ such that $[X,\overline{X}]_{G_1}$ be a
$\lambda_1$-cut of $G_1$. Then $G_1[X]$ and $G_1[\overline{X}]$ are
both connected. Let $Y=\{(x,i):\ x\in X,\ i\in V(G_2)\}$. Then
$G[Y]$ and $G[\overline{Y}]$ are connected, and $|Y|\geqslant
|V(G_2)|=\delta_1\geqslant 2$, $|\overline{Y}|\geqslant |V(G_2)|=
\delta_1\geqslant 2$. Hence, $[Y,\overline{Y}]_G$ is a restricted
edge-cut of $G$. There is an edge $xy$ in $G_1$ if and only if there
is exactly one edge between $V(xG_2)$ and $V(yG_2)$ in $G$, so
$xy\in [X,\overline{X}]_{G_1}$ if and only if there are two vertices
$i$ and $j$ of $G_2$ such that $((x,i),(y,j))\in
[Y,\overline{Y}]_G$. Therefore,
$|[Y,\overline{Y}]_G|=|[X,\overline{X}]_{G_1}|=\lambda_1$ and
 \begin{equation}\label{e4.2}
\lambda'(G)\leqslant |[Y,\overline{Y}]_G|=\lambda_1.
 \end{equation}
Combining (\ref{e4.1}) with (\ref{e4.2}), the result follows.
\end{proof}

\begin{theorem}\label{thm4.2}\quad
$\lambda'(G\textregistered K_n)=\lambda(G)$ for any $n$-regular
connected graph $G$.
\end{theorem}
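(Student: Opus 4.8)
Write $G'=G\textregistered K_n$. In the standing notation this means $G_1=G$, $G_2=K_n$, $\delta_1=n$, $\delta_2=\delta(K_n)=n-1$, $\lambda_1=\lambda(G)$ and $\lambda_2=\lambda(K_n)=n-1$; we may assume $n\geqslant 2$, since otherwise the replacement product is not defined. By Proposition~\ref{prop2.5}, $G'$ is $n$-regular, and since an $n$-regular connected graph has at least $n+1\geqslant 3$ vertices, $G'$ has at least $n(n+1)\geqslant 6$ vertices and is not a star, hence $G'$ is $\lambda'$-connected by Theorem~\ref{thm1.1}. The plan is simply to trap $\lambda'(G')$ between an upper bound and a lower bound that are both equal to $\lambda(G)$.

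For the upper bound I would invoke Theorem~\ref{thm4.1}, which gives $\lambda'(G')\leqslant\min\{\lambda_1,2\delta_2\}=\min\{\lambda(G),2(n-1)\}$. Because $G$ is $n$-regular, Whitney's inequality yields $\lambda(G)\leqslant\delta(G)=n$, and $n\leqslant 2(n-1)$ holds for every $n\geqslant 2$; therefore the minimum is attained at $\lambda(G)$ and $\lambda'(G')\leqslant\lambda(G)$.

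For the lower bound I would use that every restricted edge-cut of a graph is in particular an edge-cut, so $\lambda(H)\leqslant\lambda'(H)$ for every $\lambda'$-connected graph $H$ (this is the left inequality of Theorem~\ref{thm1.1}). Applying this to $G'$ and then Corollary~\ref{cor3.4}, which states $\lambda(G\textregistered K_n)=\lambda(G)$, gives $\lambda'(G')\geqslant\lambda(G')=\lambda(G)$. Combining the two bounds yields $\lambda'(G\textregistered K_n)=\lambda(G)$. Since both inequalities are immediate consequences of results already in hand, there is no genuine obstacle; the only points needing a moment's care are checking that $G'$ is $\lambda'$-connected (so that $\lambda'(G')$ is defined at all) and verifying that the minimum in Theorem~\ref{thm4.1} is realised by $\lambda_1$ rather than $2\delta_2$, i.e.\ the chain $\lambda(G)\leqslant n\leqslant 2(n-1)$.
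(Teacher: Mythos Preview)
Your proof is correct and follows essentially the same route as the paper: sandwich $\lambda'(G\textregistered K_n)$ between the upper bound from Theorem~\ref{thm4.1} and the lower bound $\lambda(G\textregistered K_n)=\lambda(G)$ from Corollary~\ref{cor3.4} combined with $\lambda\leqslant\lambda'$. The only minor difference is that the paper does not bother to determine which term realises the minimum in Theorem~\ref{thm4.1}, since $\min\{\lambda_1,2\delta_2\}\leqslant\lambda_1$ holds trivially; your verification that $\lambda(G)\leqslant n\leqslant 2(n-1)$ is harmless but unnecessary.
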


\begin{proof}\quad
By Corollary~\ref{cor3.4}, $\lambda(G\textregistered
K_n)=\lambda(G)$. By Theorem~\ref{thm4.1},
$\lambda'(G\textregistered K_n)\leqslant\lambda(G)$, and so
$$
\lambda(G)=\lambda(G\textregistered K_n)\leqslant
\lambda'(G\textregistered K_n)\leqslant \lambda(G).
$$
The result follows.
\end{proof}

For $\delta_1\leqslant 3$, Theorem~\ref{thm4.2} shows that
$\lambda'(G_1\textregistered G_2)=\lambda(G_1)$. In the following
discussion, we always assume $\delta_1\geqslant 4$.

\begin{lemma}\label{lem4.3}\quad
Suppose that both $G_1$ and $G_2$ are connected and
$\delta_1\geqslant 4$, $F$ be a $\lambda'$-cut of
$G_1\textregistered G_2$ and $\{X_1,X_2,\dots,X_n\}$ be a partition
of $V(G_1\textregistered G_2)$ satisfied property in
Proposition~\ref{prop2.5}. If there is some $i\in I_n$ such that
$G[X_i]$ is disconnected in $G-F$, then
 \begin{equation}\label{e4.3}
 \lambda'(G_1\textregistered G_2)\geqslant\min\{\kappa_1+\lambda_2-1,2\lambda_2,\lambda_2'+2\}.
 \end{equation}
\end{lemma}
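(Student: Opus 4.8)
The plan is to fix a partition-index $i$ with $G[X_i]$ disconnected in $G-F$, write $X_i = A \cup B$ where $A$ and $B$ are a partition of $V(X_i)$ separated in $(G-F)[X_i]$, and then bound $|F|$ from below according to how $F$ interacts with the rest of the graph. Recall $G[X_i]\cong G_2$, which is $\delta_2$-regular and $\lambda_2$-connected; recall also that $G-X_i$ together with the matching edges between $X_i$ and the other blocks encodes a copy of $G_1$ with its vertex $x_i$ deleted, so connectivity of $G_1$ is available.

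First I would split into cases on whether the two sides of the disconnection of $X_i$ survive globally, i.e. whether $A$ and $B$ lie in different components of $G-F$ or in the same one. Case 1: $A$ and $B$ lie in different components of $G-F$. Then $F$ must cut every path from $A$ to $B$; besides the $|[A,B]_{G[X_i]}| \ge \lambda_2$ internal edges (this uses $\lambda_2$-connectivity of $G_2$ applied to the nontrivial partition $A,B$ of $V(G_2)$), there are also the external matching edges leaving $A$ and leaving $B$ that must be cut or else routed around — since $\kappa_1 \ge 2$... wait, I would here need to be careful: the lemma hypothesis does not assume $\kappa_1 \ge 2$, only connectivity. The term $\kappa_1 + \lambda_2 - 1$ in the bound, however, does suggest that the argument producing it is one where we route $\kappa_1$ internally-vertex-disjoint-in-$G_1$ detour paths out of $X_i$ and back, combining with $\lambda_2 - 1$ further internal paths; so I expect Case 1 to produce either $\ge 2\lambda_2$ (when $F$ cuts enough internal edges) or $\ge \kappa_1 + \lambda_2 - 1$ (when it instead cuts the external matching edges and reroutes). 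Concretely: pick $(x_i,a)\in A$ and $(x_i,b)\in B$; find $\lambda_2-1$ edge-disjoint $A$–$B$ paths inside $X_i$ avoiding one designated pair of matching edges, plus $\kappa_1$ further edge-disjoint $A$–$B$ paths that exit $X_i$ through $\kappa_1$ distinct matching edges and return through $\kappa_1$ distinct matching edges (using Menger in $G_1$ at the deleted vertex $x_i$, plus connectedness of $G_2$ to link things up outside), giving $\kappa_1 + \lambda_2 - 1$ edge-disjoint $A$–$B$ paths, hence $|F| \ge \kappa_1 + \lambda_2 - 1$.

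Case 2: $A$ and $B$ lie in the same component $C$ of $G-F$, so the disconnection of $X_i$ is "healed" through the outside. Then there is some other block component cut off by $F$; let $Z$ be the union of the components of $G-F$ not equal to $C$. For $Z$ to be nonempty and isolated-vertex-free and disjoint from the entire block $X_i$, either $Z$ contains a whole block (or a union thereof), in which case $d_G(Z) = d_{G_1}(\cdot) \ge \lambda_1 \ge \lambda_2' + 2$ by the hypotheses relating $\delta_1$ and the edge-degree, or $Z$ meets some block $X_j$ ($j \ne i$) in a proper nonempty subset. In the latter sub-case, $F$ restricted to $X_j$ is a restricted edge-cut of $G[X_j] \cong G_2$ provided both sides of $X_j \cap Z$ and $X_j \setminus Z$ are isolated-vertex-free within $G-F$; I would need to argue that, because $\delta_1 \ge 4$, any "small" side has enough internal room to avoid isolated vertices, or otherwise to absorb that side into a better count. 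This yields $|F| \ge \lambda_2'(G_2) \ge \lambda_2'$, and then I would argue the two matching edges between $X_i$ and the rest that get "used" by the healing push the bound to $\lambda_2' + 2$; more carefully, because $A$ and $B$ are reconnected outside through $C$, at least two matching edges incident to $X_i$ lie in $C$ and are edge-disjoint from the $\lambda_2'$-cut found inside $X_j$, giving the additive $+2$.

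The main obstacle I anticipate is the bookkeeping in Case 2: ensuring the restricted edge-cut we extract from a single block $X_j$ genuinely has no isolated vertex on either side (this is where $\delta_1 \ge 4$, i.e. $G_2$ having at least $4$ vertices, is used — a proper nonempty subset of $V(G_2)$ and its complement can each be forced to have an internal edge or else their external degree is large), and simultaneously guaranteeing that the two extra matching edges we add for the $+2$ are genuinely not already counted in that cut. The secondary obstacle is that the bound mixes $\kappa_1$ and $\lambda_1$ across the two cases, so one must verify that the edge-disjoint-path routing in Case 1 really only needs $\kappa_1$ (not $\lambda_1$) external detours and that these detours are pairwise edge-disjoint even as they pass through the shared outside region; here the structural fact from Proposition~\ref{prop2.5} that distinct blocks are joined by a perfect-matching-type pattern mirroring $E(G_1)$ is exactly what makes the paths separable. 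Once both cases are closed, taking the minimum of the three quantities $\kappa_1+\lambda_2-1$, $2\lambda_2$, $\lambda_2'+2$ over the cases gives~\eqref{e4.3}.
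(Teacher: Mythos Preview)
Your case split is based on a misunderstanding of what a $\lambda'$-cut is. Since $F$ is a \emph{minimum} restricted edge-cut, $G-F$ has exactly two components, say $X$ and $\overline{X}$, with $F=[X,\overline{X}]$. The hypothesis that $G[X_i]$ is disconnected in $G-F$ forces $X_i\cap X\ne\emptyset$ and $X_i\cap\overline{X}\ne\emptyset$, and the only edges of $G[X_i]$ lying in $F$ are those between $X_i\cap X$ and $X_i\cap\overline{X}$. Hence the natural choice $A=X_i\cap X$, $B=X_i\cap\overline{X}$ always has $A$ and $B$ in different components of $G-F$: your Case~2 is vacuous, and with it your route to the term $\lambda_2'+2$ disappears.

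Your Case~1 argument for $\kappa_1+\lambda_2-1$ also does not work as written. To build $\kappa_1$ edge-disjoint external $A$--$B$ paths you would need $\kappa_1$ distinct matching edges leaving $A$ and $\kappa_1$ distinct ones entering $B$; but $|A|$ or $|B|$ may be much smaller than $\kappa_1$ (indeed $|A|$ can equal $1$ when $X\not\subset X_i$). The paper's decomposition avoids this by splitting differently: first, if \emph{two} blocks $X_j,X_k$ are disconnected in $G-F$, then $|F|\ge\lambda(G[X_j])+\lambda(G[X_k])=2\lambda_2$. Otherwise exactly one block $X_j$ is cut, and the split is on whether the entire fragment $X$ lies inside $X_j$. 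If $X\subset X_j$, then $|F|=|X|+|[X,X_j\setminus X]|$; with $2\le|X|\le\delta_1-2$ the internal cut is a restricted edge-cut of $G_2$, giving $|F|\ge \lambda_2'+2$ (and $|X|=\delta_1-1$ gives $\ge 2\lambda_2$). If $X\nsubseteq X_j$, then there exist whole blocks $X_k\subset X$ and $X_\ell\subset\overline{X}$; applying Menger in $G_1-x_j$ yields $\kappa_1-1$ internally disjoint $x_k$--$x_\ell$ paths, which lift to $\kappa_1-1$ paths in $G-X_j$ each crossing $F$, while the cut inside $X_j$ contributes $\lambda_2$ more, giving $|F|\ge\kappa_1+\lambda_2-1$. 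This is where each of the three terms actually arises.
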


\begin{proof}\quad
Let $G=G_1\textregistered G_2$. Since $F$ is a $\lambda'$-cut of
$G$, there is some $X\subset V(G)$ with $|X|\geqslant 2$ such that
$F=E_G(X)$. Without loss of generality assume
$|\overline{X}|\geqslant |X|$.

If there exist two distinct $j, k\in I_n$ such that $G[X_j]$ and
$G[X_k]$ are disconnected in $G-F$, then
\begin{equation}\label{e4.4}
|F|\geqslant \lambda(G[X_j])+\lambda(G[X_k])=2\lambda_2.
\end{equation}

Now assume that there exists exactly one integer, say $j\in I_n$,
such that $G[X_j]$ is disconnected in $G-F$. Then $X_j\cap X\neq
\emptyset$ and $X_j\cap \overline{X}\neq \emptyset$. Consider the
following two cases.

{\it Case 1.} $X\subset X_j$.

In this case, $\overline{X}=(V(G)\setminus X_j)\cup (X_j\setminus
X)$. Thus
\begin{align*}
|F|= & |[X, \overline{X}]|\\
   = & |[X, (V(G)\setminus X_j)]|+|[X, X_j\setminus X]|\\
   = & |X|+|[X, X_j\setminus X]|
\end{align*}
If $|X|=\delta_1-1$, then $|[X, X_j\setminus X]|=\delta_2$, and so
$$
|F|=\delta_1-1+\delta_2\geqslant 2\delta_2\geqslant 2\lambda_2.
$$
If $2 \leqslant |X|\leqslant \delta_1-2$, then $[X, X_j\setminus X]$
is a restricted edge-cut of $G[X_j]$, and so
$$
|F|\geqslant |X|+\lambda'(G[X_j])\geqslant \lambda_2'+2.
$$
Hence, in this case,
\begin{equation}\label{e4.5}
|F|\geqslant \min\{2\lambda_2, \lambda_2'+2\}.
\end{equation}

{\it Case 2.} $X\nsubseteq X_j$.

Since $|\overline{X}|\geqslant |X|$, $\overline{X}\nsubseteq X_j$.
Equivalently, there exist at least two sets $X_k$ and $X_{\ell}$
other than $X_j$ such that $X_k\subset X$ and $X_{\ell}\subset
\overline{X}$. Since $\kappa(G_1-u)\geqslant \kappa_1-1 \geqslant 0$
for any vertex $u\in V(G_1)$, there are at least $\kappa_1-1$
internally vertex-disjoint paths between any two distinct vertices
$x$ and $y$ in $G_1-u$. By the definition of $G$, it is easy to see
that there are at least $\kappa_1-1$ internally vertex-disjoint
paths $P_1,P_2,\dots,P_{\kappa_1-1}$ between $X_k$ and $X_{\ell}$ in
$G-X_j$. Let $F'=F\setminus [X_j\cap X, X_j\cap \overline{X}]$.
Since $X_k\subset X$ and $X_{\ell}\subset \overline{X}$,
$|E(P_i)\cap F'|\geqslant 1$ for each $i\in
\{1,2,\dots,\kappa_1-1\}$ and $|F'|\geqslant \kappa_1-1$. Thus, we
have
\begin{align*}
|F|=|[X, \overline{X}]| & = |F'|+|[X_j\cap X, X_j\cap \overline{X}]|\\
                        & \geqslant \kappa_1-1 + \lambda(G[X_j])\\
                        & =\kappa_1+\lambda_2-1,
\end{align*}
that is,
\begin{equation}\label{e4.6}
|F|\geqslant \kappa_1+\lambda_2-1.
\end{equation}
Note that if $\kappa_1=1$, then $|F|\geqslant |[X_j\cap X, X_j\cap
\overline{X}]|\geqslant \lambda_2$ and so (\ref{e4.6}) also holds.

By (\ref{e4.4}), (\ref{e4.5}) and (\ref{e4.6}), the inequality
(\ref{e4.3}) is established.
\end{proof}

\begin{theorem}\label{thm4.4}\quad
Suppose that both $G_1$ and $G_2$ are connected and
$\delta_1\geqslant 4$. Then
 \begin{equation}\label{e4.7}
\min\{\lambda_1,\kappa_1+\lambda_2-1,2\lambda_2,\lambda_2'+2\}\leqslant
\lambda'(G_1\textregistered
G_2)\leqslant\min\{\lambda_1,2\delta_2\}.
 \end{equation}
Furthermore, if $\kappa_1\geqslant \lambda_1-\lambda_2+1$ (or
$\kappa_1\geqslant \lambda_2+1$) and $G_2$ is $\lambda'$-optimal,
then
  \begin{equation}\label{e4.8}
\lambda'(G_1\textregistered G_2)=\min\{\lambda_1,2\delta_2\}.
 \end{equation}
\end{theorem}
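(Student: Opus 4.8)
The plan is to obtain the upper bound in \eqref{e4.7} for free from Theorem~\ref{thm4.1}, to derive the lower bound in \eqref{e4.7} by a two‑case dichotomy that rests on Lemma~\ref{lem4.3}, and then to deduce \eqref{e4.8} by feeding the hypothesis ``$G_2$ is $\lambda'$-optimal'' into the lower bound so that it meets the upper bound. So the only genuinely new argument to supply is the ``all clouds stay connected'' case of the lower bound and the arithmetic that closes the gap in \eqref{e4.8}.

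For the lower bound in \eqref{e4.7}, write $G=G_1\textregistered G_2$, let $F$ be a $\lambda'$-cut of $G$, and pick a $\lambda'$-fragment $X$ with $F=E_G(X)$, so that $G[X]$ and $G[\overline X]$ are the two connected components of $G-F$ and $|X|,|\overline X|\geqslant 2$. Let $\{X_1,\dots,X_n\}$ be the partition from Proposition~\ref{prop2.5}. If $(G-F)[X_i]$ is disconnected for some $i\in I_n$, then Lemma~\ref{lem4.3} gives at once $|F|\geqslant\min\{\kappa_1+\lambda_2-1,\,2\lambda_2,\,\lambda_2'+2\}$. Otherwise $(G-F)[X_i]$ is connected for every $i$, hence each $X_i$ lies entirely in $X$ or entirely in $\overline X$; setting $Y=\{i\in I_n:X_i\subseteq X\}$, the fact that $X$ and $\overline X$ are both nonempty forces $\emptyset\neq Y\neq I_n$, so $E_{G_1}(Y)$ is an edge-cut of $G_1$. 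By Definition~\ref{Def2.4} the edges of $F$ are precisely the edges of $G$ joining some $X_i$ with $i\in Y$ to some $X_j$ with $j\notin Y$, and these are in bijection with the edges of $E_{G_1}(Y)$; therefore $|F|=|E_{G_1}(Y)|\geqslant\lambda_1$. Taking the minimum over the two cases yields $\lambda'(G)=|F|\geqslant\min\{\lambda_1,\kappa_1+\lambda_2-1,2\lambda_2,\lambda_2'+2\}$, and together with Theorem~\ref{thm4.1} this establishes \eqref{e4.7}.

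For \eqref{e4.8}, assume $G_2$ is $\lambda'$-optimal. Since $G_2$ is $\delta_2$-regular we have $\xi(G_2)=2\delta_2-2$, hence $\lambda_2'=2\delta_2-2$; moreover, as recorded in Section~2, $\lambda'$-optimality of a regular graph forces $\lambda_2=\delta_2$. Substituting these, the four terms on the left of \eqref{e4.7} become $\lambda_1$, $\kappa_1+\delta_2-1$, $2\delta_2$ and $2\delta_2$, so the lower bound reduces to $\min\{\lambda_1,\kappa_1+\delta_2-1,2\delta_2\}$, and it suffices to check $\kappa_1+\delta_2-1\geqslant\min\{\lambda_1,2\delta_2\}$ under either hypothesis on $\kappa_1$: if $\kappa_1\geqslant\lambda_1-\lambda_2+1=\lambda_1-\delta_2+1$ then $\kappa_1+\delta_2-1\geqslant\lambda_1$, while if $\kappa_1\geqslant\lambda_2+1=\delta_2+1$ then $\kappa_1+\delta_2-1\geqslant 2\delta_2$; in both cases the lower bound in \eqref{e4.7} equals $\min\{\lambda_1,2\delta_2\}$, which is the upper bound, giving \eqref{e4.8}. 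The heavy lifting (the ``some $(G-F)[X_i]$ disconnected'' case) is already isolated in Lemma~\ref{lem4.3}, so I do not anticipate a real obstacle; the only subtlety is the complementary case, namely verifying that when every $(G-F)[X_i]$ is connected the clouds genuinely split across the two components so that $Y$ is a proper nonempty subset of $V(G_1)$ — and this is immediate from $|X|,|\overline X|\geqslant 2$, which holds because a $\lambda'$-cut leaves no isolated vertex and $G[X]$, $G[\overline X]$ are connected.
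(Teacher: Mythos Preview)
Your proposal is correct and follows essentially the same route as the paper: invoke Theorem~\ref{thm4.1} for the upper bound, handle the case where some $G[X_i]$ is disconnected in $G-F$ via Lemma~\ref{lem4.3}, and in the remaining case observe that the clouds $X_i$ partition between $X$ and $\overline X$ so that $F$ corresponds to an edge-cut of $G_1$, giving $|F|\geqslant\lambda_1$; the derivation of \eqref{e4.8} by substituting $\lambda_2'=2\delta_2-2$ and $\lambda_2=\delta_2$ and then checking that $\kappa_1+\delta_2-1$ dominates $\min\{\lambda_1,2\delta_2\}$ under either hypothesis on $\kappa_1$ is exactly the paper's argument.
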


\begin{proof}\quad
Let $G=G_1\textregistered G_2$. By Theorem~\ref{thm4.1}, we only
need to show the lower bound on $\lambda'(G_1\textregistered G_2)$
in (\ref{e4.7}). To the end, let $\{X_1,X_2,\dots,X_n\}$ be a
partition of $V(G)$ satisfied property in Proposition~\ref{prop2.5}
and $F$ be a $\lambda'$-cut of $G$. There is some $X\subset V(G)$
with $|X|\geqslant 2$ such that $F=E_G(X)$. Without loss of
generality assume $|\overline{X}|\geqslant |X|$.

By Lemma~\ref{lem4.3}, we only need to show that
$\lambda'(G)\geqslant\lambda_1$ if $G[X_i]$ is connected in $G-F$
for each $i\in I_n$.

In this case, either $X_i\subset X$ or $X_i\subset \overline{X}$ for
each $i\in I_n$. Thus, we can assume $F=E_G(Y\times V(G_2))$, where
$Y\subset V(G_1)$. By the definition of $G$, $|F| = |E_G(Y\times
V(G_2))| = |E_{G_1}(Y)|$. Since $|E_{G_1}(Y)|\geqslant \lambda_1$,
we have $|F|\geqslant \lambda_1$, and so the lower bound on
$\lambda'(G_1\textregistered G_2)$ in (\ref{e4.7}) is established.


We now show the equality (\ref{e4.8}). If $\kappa_1\geqslant
\lambda_1-\lambda_2+1$ (or $\kappa_1\geqslant \lambda_2+1$) and
$G_2$ is $\lambda'$-optimal, then $\lambda_2'=\xi(G_2)=2\delta_2-2$,
and so $\lambda_2=\delta_2$. Thus, we have
$$
\kappa_1+\lambda_2-1 \geqslant \lambda_1 ~(\text{or~}
\kappa_1+\lambda_2-1 \geqslant 2\lambda_2),
$$
and so
\begin{equation}\label{e4.9}
\min\{\lambda_1,\kappa_1+\lambda_2-1,2\lambda_2,\lambda_2'+2\}=\min\{\lambda_1,2\delta_2\}.
\end{equation}
Comparing (\ref{e4.7}) with (\ref{e4.9}), the equality (\ref{e4.8})
is established.
\end{proof}


Note that if $G_2$ is $\lambda'$-optimal then
$\delta_1=|V(G_2)|\geqslant 4$, and so $\lambda'(G_1\textregistered
G_2)$ is well-defined. By Theorem~\ref{thm4.4}, we obtain the
following corollary immediately.

\begin{corollary}\label{cor4.5}\quad
Assume $G_1$ and $G_2$ are connected. If $\kappa_1 = \lambda_1$ and
$G_2$ is $\lambda'$-optimal, then
$$
\lambda'(G_1\textregistered G_2)=\min\{\lambda_1,2\delta_2\}.
$$
\end{corollary}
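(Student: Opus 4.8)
The plan is to derive Corollary~\ref{cor4.5} as a direct specialization of Theorem~\ref{thm4.4}. The corollary claims that when $\kappa_1=\lambda_1$ and $G_2$ is $\lambda'$-optimal, we get $\lambda'(G_1\textregistered G_2)=\min\{\lambda_1,2\delta_2\}$. First I would observe that by Whitney's inequality $\kappa_1\leqslant\lambda_1\leqslant\delta_1$, so the hypothesis $\kappa_1=\lambda_1$ is just a mild structural condition. The point is to check that $\kappa_1=\lambda_1$ implies one of the two sufficient conditions named in Theorem~\ref{thm4.4} for equality~(\ref{e4.8}), namely $\kappa_1\geqslant\lambda_1-\lambda_2+1$ or $\kappa_1\geqslant\lambda_2+1$.

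Next I would use the $\lambda'$-optimality of $G_2$. As noted in the excerpt (and reproved inside Theorem~\ref{thm4.4}), $\lambda'$-optimal means $\lambda_2'=\xi(G_2)=2\delta_2-2$, which forces $\lambda_2=\delta_2$ (since $\lambda_2\leqslant\delta_2$ always, and $\lambda_2<\delta_2$ would make $\lambda_2'\leqslant 2\lambda_2<2\delta_2-2$, contradicting optimality via Theorem~\ref{thm1.1}). So in particular $\lambda_2=\delta_2\geqslant 1$, hence $\lambda_2-1\geqslant 0$. Therefore $\kappa_1=\lambda_1\geqslant\lambda_1-(\lambda_2-1)=\lambda_1-\lambda_2+1$, which is precisely the first of the two sufficient conditions in Theorem~\ref{thm4.4}. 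Also I should confirm $\delta_1\geqslant 4$, needed to invoke Theorem~\ref{thm4.4}: this follows from $\delta_1=|V(G_2)|$ and the fact (stated in the excerpt just before the corollary) that a $\lambda'$-optimal graph has order at least $4$.

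Having verified the hypotheses, I would simply apply~(\ref{e4.8}) of Theorem~\ref{thm4.4} to conclude $\lambda'(G_1\textregistered G_2)=\min\{\lambda_1,2\delta_2\}$, which is the assertion. There is essentially no obstacle here — the work is entirely in checking that $\kappa_1=\lambda_1$ together with $\lambda'$-optimality of $G_2$ falls under the umbrella of Theorem~\ref{thm4.4}'s equality case. The only subtlety worth a sentence is making explicit the implication ``$G_2$ $\lambda'$-optimal $\Rightarrow\lambda_2=\delta_2$'' and the chain $\kappa_1=\lambda_1\geqslant\lambda_1-\lambda_2+1$; everything else is immediate from the cited theorem. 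This is why the proof in the excerpt is only two lines.
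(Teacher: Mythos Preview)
Your proposal is correct and follows exactly the paper's approach: the corollary is stated immediately after Theorem~\ref{thm4.4} with no separate proof, since one only needs to observe that $\kappa_1=\lambda_1$ (together with $\lambda_2\geqslant 1$) gives $\kappa_1\geqslant\lambda_1-\lambda_2+1$, and that $\lambda'$-optimality of $G_2$ forces $\delta_1=|V(G_2)|\geqslant 4$. One small remark: in your parenthetical justification of $\lambda_2=\delta_2$, the inequality ``$\lambda_2'\leqslant 2\lambda_2$'' is not quite the right intermediate step; the clean argument is that $\lambda_2<\delta_2$ would make any minimum edge-cut a restricted edge-cut, giving $\lambda_2'\leqslant\lambda_2<\delta_2\leqslant 2\delta_2-2$ --- but the paper already records this implication as ``clear'' in Section~2, so you may simply cite it.
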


A connected graph $G$ is {\it super-$\lambda$} if every
$\lambda$-cut isolates a vertex in $G$. It is clear that $G$ is
super-$\lambda$ if and only if $\lambda'(G)>\lambda(G)$. By
Theorem~\ref{thm4.4}, we obtain the following results immediately.

\begin{theorem}\label{thm4.6}\quad
Suppose that $G_1$ and $G_2$ are two connected graphs. If $\kappa_1
\geqslant \lambda_1-\lambda_2+1\geqslant 2$ (or $\kappa_1\geqslant
\lambda_2+1$) and $G_2$ is $\lambda'$-optimal, then

{\rm (a)}\ $G_1\textregistered G_2$ is $\lambda'$-optimal if and
only if $\lambda_1\geqslant 2\delta_2$;

{\rm (b)}\ $G_1\textregistered G_2$ is super-$\lambda$ if and only
if $\lambda_1 > \delta_2+1$.
\end{theorem}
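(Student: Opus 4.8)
The plan is to deduce both statements directly from the value of $\lambda'(G_1\textregistered G_2)$ furnished by Theorem~\ref{thm4.4}, after pinning down $\lambda(G_1\textregistered G_2)$ by Theorem~\ref{thm3.1}. First I would record the standing facts implied by the hypotheses. Since $G_2$ is $\lambda'$-optimal, $\lambda_2'=\xi(G_2)=2\delta_2-2$, hence $\lambda_2=\delta_2$, and (as noted just before Corollary~\ref{cor4.5}) $\delta_1=|V(G_2)|\geqslant 4$; moreover a connected $\delta_2$-regular graph on at least four vertices cannot be $\lambda'$-optimal when $\delta_2=1$, so $\delta_2\geqslant 2$, which gives the strict inequality $\delta_2+1<2\delta_2$. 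Write $G=G_1\textregistered G_2$; by Proposition~\ref{prop2.5} it is $(\delta_2+1)$-regular, so $\xi(G)=2\delta_2$, and by Theorem~\ref{thm4.4} (either branch of the hypothesis applies) $\lambda'(G)=\min\{\lambda_1,2\delta_2\}$.

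For part (a): $G$ is $\lambda'$-optimal iff $\lambda'(G)=\xi(G)=2\delta_2$, i.e. iff $\min\{\lambda_1,2\delta_2\}=2\delta_2$, i.e. iff $\lambda_1\geqslant 2\delta_2$. For part (b) I would first show $\lambda(G)=\min\{\lambda_1,\delta_2+1\}$. In both cases of the hypothesis $\kappa_1\geqslant 2$: in the first, $\kappa_1\geqslant\lambda_1-\lambda_2+1\geqslant 2$; in the second, $\kappa_1\geqslant\lambda_2+1=\delta_2+1\geqslant 3$. Hence the refined lower bound $(\ref{e3.2})$ of Theorem~\ref{thm3.1} applies and, together with the upper bound $(\ref{e3.1})$ and $\lambda_2=\delta_2$, it squeezes $\lambda(G)$ between $\min\{\lambda_1,\lambda_2+1\}=\min\{\lambda_1,\delta_2+1\}$ and $\min\{\lambda_1,\delta_2+1\}$, giving equality (this is Corollary~\ref{cor3.2}(d)). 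Then $G$ is super-$\lambda$ iff $\lambda'(G)>\lambda(G)$, i.e. iff $\min\{\lambda_1,2\delta_2\}>\min\{\lambda_1,\delta_2+1\}$. A short case check finishes it: if $\lambda_1\leqslant\delta_2+1$ then, using $\delta_2+1\leqslant 2\delta_2$, both sides equal $\lambda_1$, so $G$ is not super-$\lambda$; if $\lambda_1\geqslant\delta_2+2$ then the left side is $\min\{\lambda_1,2\delta_2\}\geqslant\delta_2+2$ (again using $\delta_2\geqslant 2$) while the right side is $\delta_2+1$, so the strict inequality holds. Thus $G$ is super-$\lambda$ iff $\lambda_1>\delta_2+1$, which is the claim.

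The whole argument is bookkeeping once Theorem~\ref{thm4.4} is available, so I do not expect a genuine obstacle; the only points that require a little care are (i) verifying $\kappa_1\geqslant 2$ in each branch of the hypothesis, so that the sharp lower bound $(\ref{e3.2})$ may be used to compute $\lambda(G)$ exactly, and (ii) confirming $\delta_2\geqslant 2$, which is what makes $\delta_2+1<2\delta_2$ strict and therefore separates the super-$\lambda$ regime ($\lambda_1>\delta_2+1$) from the strictly larger $\lambda'$-optimal regime ($\lambda_1\geqslant 2\delta_2$).
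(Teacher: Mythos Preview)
Your proposal is correct and follows essentially the same route as the paper: both invoke Theorem~\ref{thm4.4} to get $\lambda'(G)=\min\{\lambda_1,2\delta_2\}$ and Corollary~\ref{cor3.2}(d) (with $\kappa_1\geqslant 2$, $\lambda_2=\delta_2$) to get $\lambda(G)=\min\{\lambda_1,\delta_2+1\}$, then read off the equivalences. You are slightly more explicit in verifying $\kappa_1\geqslant 2$ in each branch and in the case split for (b), but the argument is the same.
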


\begin{proof}\quad
Let $G=G_1\textregistered G_2$. Clearly, $\xi(G)=2\delta_2$, and
$\lambda_2=\delta_2\geqslant 2$ since $G_2$ is $\lambda'$-optimal.

Since $\kappa_1\geqslant \lambda_1-\lambda_2+1$ (or
$\kappa_1\geqslant \lambda_2+1$) and $G_2$ is $\lambda'$-optimal, by
Theorem~\ref{thm4.4} we have that
  \begin{equation}\label{e4.10}
\lambda'(G)=\min\{\lambda_1,2\delta_2\}.\qquad\qquad
 \end{equation}
Thus, $G$ is $\lambda'$-optimal if and only if
$\lambda'(G)=\xi(G)=2\delta_2$, that is $\lambda_1\geqslant
2\delta_2$ from (\ref{e4.10}).

Since $\kappa_1\geqslant 2$ and $\lambda_2=\delta_2$, by
Corollary~\ref{cor3.2} (d) we have that
   \begin{equation}\label{e4.11}
 \lambda(G)=\min\{\lambda_1,\delta_2+1\}.\qquad\qquad
 \end{equation}
Note  $2\delta_2 > \delta_2+1$ for $\delta_2\geqslant 2$. It follows
that $G$ is super-$\lambda$ if and only if $\lambda'(G)>\lambda(G)$,
that is $\lambda_1 > \delta_2+1$ from (\ref{e4.11}).
\end{proof}

\begin{corollary}\label{cor4.6}\quad
Assume $G_1$ and $G_2$ are two connected graphs with
$\delta_1\geqslant 4$. If $\kappa_1 = \lambda_1\geqslant 2$ and
$G_2$ is $\lambda'$-optimal, then

{\rm (a)}\ $G_1\textregistered G_2$ is $\lambda'$-optimal if and
only if $\lambda_1\geqslant 2\delta_2$;

{\rm (b)}\ $G_1\textregistered G_2$ is super-$\lambda$ if and only
if $\lambda_1 > \delta_2+1$.
\end{corollary}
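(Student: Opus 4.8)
The plan is to mimic the proof of Theorem~\ref{thm4.6}, but feeding it the (slightly different) hypotheses available here. Write $G=G_1\textregistered G_2$. Since $G_2$ is $\lambda'$-optimal and $\delta_2$-regular, it is in particular $\lambda$-optimal, so $\lambda_2=\delta_2$; moreover $\delta_2\geqslant 2$, since a $\delta_2$-regular connected graph with $\delta_2\leqslant 1$ has at most two vertices, contradicting $\delta_1=|V(G_2)|\geqslant 4$. Also $\kappa_1=\lambda_1\geqslant 2$ by assumption. The first key input is that $\kappa_1=\lambda_1$ together with the $\lambda'$-optimality of $G_2$ lets us apply Corollary~\ref{cor4.5} to get $\lambda'(G)=\min\{\lambda_1,2\delta_2\}$. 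The second key input is that $\kappa_1\geqslant 2$ and $\lambda_2=\delta_2$ let us apply Corollary~\ref{cor3.2}(d) to get $\lambda(G)=\min\{\lambda_1,\delta_2+1\}$. With these two formulas the corollary becomes a short computation.

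For part (a): by definition $G$ is $\lambda'$-optimal precisely when $\lambda'(G)=\xi(G)$, and since $G$ is $(\delta_2+1)$-regular we have $\xi(G)=2\delta_2$; hence $G$ is $\lambda'$-optimal iff $\min\{\lambda_1,2\delta_2\}=2\delta_2$, that is, iff $\lambda_1\geqslant 2\delta_2$. For part (b): $G$ is super-$\lambda$ iff $\lambda'(G)>\lambda(G)$, i.e.\ iff $\min\{\lambda_1,2\delta_2\}>\min\{\lambda_1,\delta_2+1\}$. Here $2\delta_2>\delta_2+1$ because $\delta_2\geqslant 2$. If $\lambda_1\leqslant\delta_2+1$ then both minima equal $\lambda_1$, so $G$ is not super-$\lambda$; if $\lambda_1>\delta_2+1$ then $\lambda(G)=\delta_2+1<\min\{\lambda_1,2\delta_2\}=\lambda'(G)$, so $G$ is super-$\lambda$. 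Hence $G$ is super-$\lambda$ iff $\lambda_1>\delta_2+1$, completing the proof.

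I do not anticipate a genuine obstacle; the one point needing care is the choice of which earlier result to invoke. One cannot simply quote Theorem~\ref{thm4.6}, whose hypothesis ``$\kappa_1\geqslant\lambda_1-\lambda_2+1\geqslant 2$'' may fail here (namely when $\lambda_1\leqslant\lambda_2$). Instead one routes the evaluation of $\lambda'(G)$ through Corollary~\ref{cor4.5} (equivalently Theorem~\ref{thm4.4}), whose hypothesis ``$\kappa_1\geqslant\lambda_1-\lambda_2+1$'' is automatic once $\kappa_1=\lambda_1$ since $\lambda_2\geqslant 1$, and the evaluation of $\lambda(G)$ through Corollary~\ref{cor3.2}(d), whose hypothesis ``$\kappa_1\geqslant 2$'' is exactly the extra assumption $\lambda_1\geqslant 2$.
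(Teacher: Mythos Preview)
Your proof is correct and matches the paper's approach: the paper gives no explicit proof for this corollary, evidently intending it to follow from the proof of Theorem~\ref{thm4.6} with Corollary~\ref{cor4.5} supplying $\lambda'(G)=\min\{\lambda_1,2\delta_2\}$ and Corollary~\ref{cor3.2}(d) supplying $\lambda(G)=\min\{\lambda_1,\delta_2+1\}$, exactly as you do. Your observation that Theorem~\ref{thm4.6} cannot be cited verbatim (since $\lambda_1-\lambda_2+1\geqslant 2$ may fail when $\lambda_1\leqslant\lambda_2$) is well taken and makes the intended derivation precise.
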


\begin{corollary}\label{cor4.8}\quad
$\lambda'(G\textregistered C_n) = \min\{\lambda(G),4\}$ if $G$ is an
$n$-regular graph and $\kappa(G)\geqslant 3$.
\end{corollary}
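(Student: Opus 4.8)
The plan is to deduce this statement directly from Theorem~\ref{thm4.4} (specifically the equality~(\ref{e4.8})) together with Theorem~\ref{thm4.2}, after matching up the parameters. Here $G_1=G$ is $n$-regular, so $\delta_1=n$, while $G_2=C_n$ gives $\delta_2=2$ and $\lambda_2=\lambda(C_n)=2$; moreover $\kappa_1=\kappa(G)\geqslant 3$ and $\lambda_1=\lambda(G)$. Since an $n$-regular graph with $\kappa(G)\geqslant 3$ must satisfy $n\geqslant 3$, I would split the argument into the cases $n=3$ and $n\geqslant 4$.

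For $n=3$, the point is that $C_3=K_3$, so Theorem~\ref{thm4.2} applies and gives $\lambda'(G\textregistered C_3)=\lambda'(G\textregistered K_3)=\lambda(G)$. Because $G$ is $3$-regular, $\lambda(G)\leqslant\delta(G)=3<4$, hence $\lambda(G)=\min\{\lambda(G),4\}$, which is exactly the claimed value in this case.

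For $n\geqslant 4$, I would check that the hypotheses of the ``Furthermore'' part of Theorem~\ref{thm4.4} hold. First, $\delta_1=n\geqslant 4$. Second, $C_n$ is $\lambda'$-optimal: deleting two edges of $C_n$ at cyclic distance at least two disconnects it into two paths, each on at least two vertices, so $\lambda'(C_n)=2=\xi(C_n)$. Third, $\kappa_1=\kappa(G)\geqslant 3=\lambda_2+1$, so the alternative condition ``$\kappa_1\geqslant\lambda_2+1$'' in Theorem~\ref{thm4.4} is satisfied. Therefore~(\ref{e4.8}) applies and yields
$$
\lambda'(G\textregistered C_n)=\min\{\lambda_1,2\delta_2\}=\min\{\lambda(G),4\},
$$
completing the proof.

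There is no substantive obstacle here, as essentially all of the work is carried by Theorem~\ref{thm4.4}; the only mild care needed is (i) handling the boundary case $n=3$, where $\delta_1\leqslant 3$ forces a fall-back to Theorem~\ref{thm4.2} via the coincidence $C_3=K_3$, and (ii) the routine observation that $C_n$ is $\lambda'$-optimal for $n\geqslant 4$ so that the ``$G_2$ is $\lambda'$-optimal'' hypothesis of Theorem~\ref{thm4.4} is met.
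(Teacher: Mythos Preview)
Your proof is correct and follows the same line of reasoning intended by the paper: the case $n\geqslant 4$ is a direct application of the equality~(\ref{e4.8}) in Theorem~\ref{thm4.4} via the observation that $\kappa_1\geqslant 3=\lambda_2+1$ and that $C_n$ is $\lambda'$-optimal, while the boundary case $n=3$ is handled by Theorem~\ref{thm4.2} through the identification $C_3=K_3$, exactly as the paper remarks just before Lemma~\ref{lem4.3}.
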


\begin{example}\label{exa4.9}\quad
{\rm By Corollary~\ref{cor4.8}, it is easy to see that

$\lambda'(K_4\textregistered
C_3)=\min\{\lambda(K_4),4\}=\min\{3,4\}=3$, and

$\lambda'(CCC_n)=\lambda'(Q_n\textregistered
C_n)=\min\{\lambda(Q_n),4\}=\min\{n,4\}=\left\{\begin{array}{ll}
 3\ & {\rm if}\ n=3;\\
 4\ & {\rm if}\  n\geqslant 4.
 \end{array}\right.
 $
 }
\end{example}

\section{Replacement product of Cayley graphs}

In this section, we investigate the restricted edge-connectivity of
the replacement product of two Cayley graphs by a semidirect product
of two groups. We will further confirm that under certain conditions
on the underlying groups and generating sets, the replacement
product of two Cayley graphs is indeed a Cayley graph. Using this
result, we will give a necessary and sufficient condition for such
Cayley graphs to be $\lambda'$-optimal. Based on this condition, we
will construct an example to answer Problem~\ref{prob1.4}.

We first recall the notion of semidirect product of two groups. Let
$A=(A,\circ)$ and $B=(B,\ast)$ be two finite groups. A {\it group
homomorphism} from $A$ to $B$ is a mapping $\phi:\ A \to B$
satisfying $\phi(a\circ b)=\phi(a)\ast\phi(b)$. Let $e_A$ and $e_B$
be identities in $A$ and $B$, respectively, throughout this section.
Group homomorphisms have two important and useful properties.

\begin{proposition}\label{prop5.1}\quad
Let $A$ and $B$ be two finite groups, and $\phi$ be a group
homomorphism from $A$ to $B$. Then

 {\rm (a)}\ $\phi(e_A)=e_B$;

 {\rm (b)}\ $\phi(a^{-1})=(\phi(a))^{-1}$ for any $a\in A$.
\end{proposition}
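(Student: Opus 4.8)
The plan is to derive both parts directly from the defining property $\phi(a\circ b)=\phi(a)\ast\phi(b)$ together with the group axioms in $B$; this is entirely routine and carries no real obstacle, the only thing to be careful about is to use only the homomorphism identity and cancellation in $B$, not any assumption that $\phi$ is injective or surjective.

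For part (a), I would start from the idempotence of the identity in $A$, namely $e_A=e_A\circ e_A$, and apply $\phi$ to obtain
$$
\phi(e_A)=\phi(e_A\circ e_A)=\phi(e_A)\ast\phi(e_A).
$$
Now $\phi(e_A)$ is an element of the group $B$, so it has an inverse $(\phi(e_A))^{-1}$ in $B$; left-multiplying (or right-multiplying) both sides of the displayed equation by this inverse and using associativity gives $e_B=\phi(e_A)$, which is the claim.

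For part (b), fix $a\in A$. Using the homomorphism property and part (a),
$$
\phi(a)\ast\phi(a^{-1})=\phi(a\circ a^{-1})=\phi(e_A)=e_B,
$$
and symmetrically $\phi(a^{-1})\ast\phi(a)=\phi(a^{-1}\circ a)=\phi(e_A)=e_B$. Thus $\phi(a^{-1})$ satisfies both the left and right inverse relations for $\phi(a)$ in $B$; by uniqueness of inverses in a group, $\phi(a^{-1})=(\phi(a))^{-1}$, completing the proof. The only "step" worth flagging is the implicit appeal to cancellation/uniqueness of inverses in $B$, which is exactly where the group structure of the codomain is used.
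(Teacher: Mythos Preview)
Your proof is correct and is exactly the standard textbook argument. Note, however, that the paper does not actually prove Proposition~\ref{prop5.1}: it is stated as a well-known property of group homomorphisms and used without proof. So there is nothing to compare against; your write-up simply supplies the (routine) details that the authors omitted.
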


An {\it action} of $B$ on $A$ is a group homomorphism $\phi:\ B \to
Aut(A)$ defined by $\phi(b)=\phi_b$ and $\phi(b_1b_2)=
\phi(b_1)\phi(b_2)= \phi_{b_1}\phi_{b_2}$.

The {\it orbit} of $a\in A$ under the action $\phi$ of $B$ is
expressed as $a^B=\{\phi_b(a)\in A:\ b\in B\}$.

\begin{example}\label{exa5.2}\quad
{\rm Let $A=(\mathbb{Z}_2)^n$, $B=\mathbb{Z}_n$, and let $e_i$ be an
element in $A$ defined in (\ref{e2.1}) for each $i=0,1,\ldots,n$.

The action $\phi$ of $B$ on $A$ is defined as follows. For each
$a=a_1a_2\dots a_n\in A$,
 $$
 \phi_i(a)=a_{1-i}a_{2-i}\dots a_{n-i({\rm mod}\,n)}\ \ \text{for each $i=0,\ldots,n-1\in B$}.
 $$

For example, if $a=e_1$, then
 $\phi_i(e_1)=e_{i+1}$ for each $i=0,1,\ldots,n-1$.
Under $\phi$ the orbit $e_1^B=\{e_1, e_2,\cdots,e_n\}$.
 }
 \end{example}

We now introduce the concept of the semidirect product of two finite
groups following Robison~\cite{r03}.

The {\it (external) semidirect product $A\rtimes_\phi B$} of groups
$A$ and $B$ with respect to $\phi$ is the group with set $A\times B
= \{(a, b) : a\in A, b\in B\}$ and binary operation ``$*$"
$$
(a_1, b_1)*(a_2, b_2) = (a_1 \phi_{b_1}(a_2), b_1b_2)\ \ \text{for
any $a_1, a_2\in A$ and $b_1, b_2\in B$}.
$$
The identity is $(e_A, e_B)$. Since $\phi_b\in$ Aut$(A)$ is an
automorphism from $A$ to $A$, by Proposition~\ref{prop5.1} (a)
  \begin{equation}\label{e5.2}
 \phi_b(a)=e_A \Leftrightarrow a=e_A\ \ \text{for any $a\in A$ and $b\in B$},
 \end{equation}
By (\ref{e5.2}), it is easy to verify that the inverse $(a, b)^{-1}$
of $(a,b)$ is $(\phi_{b^{-1}}(a^{-1}), b^{-1})$, that is,
 $$
 (a, b)^{-1}=(\phi_{b^{-1}}(a^{-1}), b^{-1}).
 $$

It is also easy to check that the set $\{(a,e_B):\ a\in A\}$ forms a
normal subgroup of $A\rtimes_\phi B$ isomorphic to $A$, and the set
$\{(e_A, b):\ b\in B\}$ forms a subgroup of $A\rtimes_\phi B$
isomorphic to $B$. Thus, $A\rtimes_\phi B\cong A\rtimes B$, a {\it
semidirect product} of two subgroups $A$ and $B$ of a group
$\Gamma$, where $A$ is normal.

The {\it direct product $A\times B$} is a special case of
$A\rtimes_\phi B$, in which the action $\phi(b)$ is the identity
automorphism of $A$ for any $b\in B$, and so $(a_1, b_1)*(a_2, b_2)
= (a_1a_2, b_1b_2)$. Thus the semidirect product is a generalization
of the direct product of two groups.

Many groups can be expressed as a semidirect product of two groups.
For example, using the semidirect product, Feng~\cite{f06} and
Ganesan~\cite{g13} determined the automorphism groups of some Cayley
graphs generated by transposition sets; Zhou~\cite{z11} determined
the full automorphism group of the alternating group graph. The
semidirect product of groups is also used to prove that some
networks are Cayley graphs. For example, using the semidirect
product, Zhou {\it et al.}~\cite{zcx12} showed that the dual-cube
$DC_n$ is a Cayley graph $C_{(\Gamma\times\Gamma)\rtimes_\phi
\mathbb{Z}_2}(S)$, where $\Gamma =(\mathbb{Z}_2)^n$, the action
$\phi: \mathbb{Z}_2\to {\rm Aut}(\Gamma\times\Gamma)$ is defined by
 $$
 \begin{array}{rl}
 \phi_i(\alpha,\beta)
 =&\left\{\begin{array}{ll}
  (\alpha,\beta) &\ {\rm if}\ i=0;\\
  (\beta,\alpha) &\ {\rm if}\ i=1,
 \end{array}\right.
 \end{array}
 $$
and $S=\{(e_0,e_1,0),\ldots, (e_0,e_n,0),(e_0,e_0,1)\}$.

\begin{asp}\label{asp5.3}\quad
{\it Let $A$ and $B$ be two groups with generating sets $S_A$ and
$S_B$, respectively, $|S_A|=|B|\geqslant 2$, $\phi$ be such an
action of $B$ on $A$ that $S_A=x^B$ for some $x\in S_A$, and
$S=\{(e_A,b): \ b \in S_B\}\cup\{(x,e_B)\}$. }
\end{asp}

\begin{theorem}\label{thm5.4}\quad  
Under Assumption~\ref{asp5.3}, $S$ generates $A\rtimes_\phi B$.
Moreover, if $S_B=S_B^{-1}$ and $x=x^{-1}$, then $S=S^{-1}$ and
$C_{A\rtimes_\phi B}(S)$ is a replacement product of $C_A(S_A)$ and
$C_B(S_B)$.
\end{theorem}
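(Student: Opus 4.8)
The plan is to prove both assertions by direct computation inside $\Gamma:=A\rtimes_\phi B$, the key preliminary point being that the hypothesis $|S_A|=|B|$ forces the map $\psi\colon B\to S_A$ given by $\psi(b)=\phi_b(x)$ to be a bijection (it is surjective since $S_A=x^B$, hence bijective by counting). In particular $\phi_b(x)=x$ implies $b=e_B$. Under the extra hypothesis $x=x^{-1}$, Proposition~\ref{prop5.1}(b) yields $\phi_b(x)^{-1}=\phi_b(x^{-1})=\phi_b(x)$, so every element of $S_A$ is an involution; thus $S_A=S_A^{-1}$ automatically, $C_A(S_A)$ is an undirected $|S_A|$-regular graph on $|A|$ vertices, and $C_B(S_B)$ is a $|S_B|$-regular graph on $|B|=|S_A|$ vertices, so the replacement product $C_A(S_A)\textregistered C_B(S_B)$ is well defined.

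For the generation statement, note $(e_A,b_1)*(e_A,b_2)=(e_A,b_1b_2)$, so $\{(e_A,b):b\in B\}$ is a subgroup isomorphic to $B$ and is contained in $\langle S\rangle$ since $S_B$ generates $B$. Conjugating, $(e_A,b)*(x,e_B)*(e_A,b)^{-1}=(\phi_b(x),e_B)$; letting $b$ run over $B$ and using surjectivity of $\psi$, $\langle S\rangle$ contains $(s,e_B)$ for all $s\in S_A$, hence the whole subgroup $\{(a,e_B):a\in A\}\cong A$ since $S_A$ generates $A$. Finally $(a,e_B)*(e_A,b)=(a,b)$, so $\langle S\rangle=\Gamma$. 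When moreover $S_B=S_B^{-1}$ and $x=x^{-1}$, the formula $(a,b)^{-1}=(\phi_{b^{-1}}(a^{-1}),b^{-1})$ together with Proposition~\ref{prop5.1}(a) gives $(e_A,b)^{-1}=(e_A,b^{-1})\in S$ and $(x,e_B)^{-1}=(\phi_{e_B}(x^{-1}),e_B)=(x,e_B)\in S$ (using $\phi_{e_B}=\mathrm{id}_A$), so $S=S^{-1}$ and $C_\Gamma(S)$ is undirected.

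To identify $C_\Gamma(S)$ with the replacement product, I would label, at each vertex $a$ of $G_1:=C_A(S_A)$, the edges incident with $a$ by the elements of $V(G_2)=B$ via $e_a^{\,b}:=\{a,\,a\phi_b(x)\}$; this is well defined because $\psi$ is a bijection, and it is consistent along edges since $a'=a\phi_b(x)$ implies $a=a'\phi_b(x)^{-1}=a'\phi_b(x)$, whence $e_{a'}^{\,b}=e_a^{\,b}$. With this labeling, Definition~\ref{Def2.4} makes $(a,b)$ adjacent to $(a',b')$ in $C_A(S_A)\textregistered C_B(S_B)$ exactly when either $a=a'$ and $b^{-1}b'\in S_B$, or $b=b'$ and $a'=a\phi_b(x)$. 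On the other side, $(a,b)^{-1}*(a',b')=(\phi_{b^{-1}}(a^{-1}a'),\,b^{-1}b')$, and checking when this lies in $S=\{(e_A,b):b\in S_B\}\cup\{(x,e_B)\}$ --- using (\ref{e5.2}) for the first coordinate and $\phi_{b^{-1}}(a^{-1}a')=x\Leftrightarrow a^{-1}a'=\phi_b(x)$ --- gives precisely the same two alternatives. Hence the identity map on $A\times B$ is a graph isomorphism $C_\Gamma(S)\to C_A(S_A)\textregistered C_B(S_B)$.

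The one step that really needs the hypotheses is the \emph{consistency of the edge labeling} of $C_A(S_A)$: the label read off an edge from one endpoint must agree with the label read from the other, and this is exactly what $x=x^{-1}$ (via the involutivity of all generators in $S_A$, which in turn rests on $\psi$ being bijective) provides. Everything else is bookkeeping with the semidirect-product multiplication, and I expect no further obstacle.
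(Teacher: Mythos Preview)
Your proof is correct and follows essentially the same route as the paper's: the conjugation $(e_A,b)*(x,e_B)*(e_A,b)^{-1}=(\phi_b(x),e_B)$ to reach all of $S_A\times\{e_B\}$ and hence generation, followed by the direct adjacency computation $(a,b)^{-1}*(a',b')=(\phi_{b^{-1}}(a^{-1}a'),\,b^{-1}b')$ and a case split according to which part of $S$ this lands in. Your write-up is in fact a bit more careful than the paper's in two places the paper treats informally: you make explicit that $|S_A|=|B|$ forces $b\mapsto\phi_b(x)$ to be a \emph{bijection} (so the labeling $e_a^{\,b}=\{a,a\phi_b(x)\}$ assigns distinct labels to the $\delta_1$ edges at $a$), and you verify the consistency $e_a^{\,b}=e_{a'}^{\,b}$ along each edge via the involutivity of $\phi_b(x)$, which is precisely where the hypothesis $x=x^{-1}$ enters.
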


\begin{remark}\label{rem5.5}\quad
{\rm Before proving this result, we make some remarks on the
theorem.

 (a) \
Since Cayley graphs under our discussion are undirected, by the
definition of Cayley graphs, it is clear that the conditions
``$S_A=S_A^{-1}$, $S_B=S_B^{-1}$ and $S=S^{-1}$" are necessary to
guarantee that Cayley graphs $C_A(S_A)$, $C_B(S_B)$ and
$C_{A\rtimes_\phi B}(S)$ are undirected. By
Proposition~\ref{prop5.1} (b) for any action $\phi$ of $B$ on $A$,
 $$
 (x,e_B)^{-1}=(\phi_{e_B}(x^{-1}),e_B)=(x^{-1},e_B).
 $$
Thus, the condition ``$S=S^{-1}$" means that
 $$
 \begin{array}{rl}
 \{(e_A,b): \ b \in S_B\}\cup\{(x,e_B)\}
 &=(\{(e_A,b): \ b \in S_B\}\cup\{(x,e_B)\})^{-1}\\
 &= \{(e_A,b^{-1}): \ b \in S_B\}\cup\{(x^{-1},e_B)\},
 \end{array}
 $$
which implies that the condition ``$S=S^{-1}$" is equivalent to the
condition ``$S_B=S_B^{-1}\ {\rm and}\ x=x^{-1}$".

Furthermore, since $S_A=x^B$ under the action $\phi$, for any $a\in
S_A$, there is some $b\in B$ such that $a=\phi_b(x)$. By
Proposition~\ref{prop5.1} (b) we have that
 $$
 x=x^{-1} \Leftrightarrow a=\phi_b(x)=\phi_b(x^{-1})=(\phi_b(x))^{-1}=a^{-1}\ \text{for any $a\in
S_A$}.
 $$

(b)\ The original and simple statement of Theorem~\ref{thm5.4} is
due to Alon {\it et al.} (see Theorem 2.3 in~\cite{alw01}, as a
special case of zig-zag products without proof), and a comparatively
complete statement is given by Hoory {\it et al.} (see Theorem 11.22
in~\cite{hlw06}) without the conditions ``$x=x^{-1}$ and
$S_B=S_B^{-1}$", and with an unperfect proof. We give a complete
proof here. }
\end{remark}


\begin{proof}\quad
By the explanation in Remark~\ref{rem5.5} (a), we only need to prove
that $S$ generates $A\rtimes_\phi B$ and $C_{A\rtimes_\phi B}(S)$ is
a replacement product of $C_A(S_A)$ and $C_B(S_B)$.

We first show that $S$ generates $A\rtimes_\phi B$. To the end, we
only need to show that any $(a, b)\in A\rtimes_\phi B$ can be
expressed as products of a sequence of elements of $S$.

By the hypothesis, $S_A$ is a generating set of $A$ and is the orbit
$x^B$ of some $x\in S_A$ under the action $\phi$ of $B$ on $A$.
Since $(a, b)=(a, e_B)*(e_A, b)$, it can be written as a product of
elements from the set $\{ (s_a, e_B):\ s_a\in S_A\}\cup\{(e_A,
s_b):\ s_b\in S_B\}$. Since $S_A=x^B$, for $s_a\in S_A$ there is
some $b\in B$ such that $s_a=\phi_b(x)$, where $b$ can be expressed
as products of a sequence of elements of $S_B$ since $S_B$ is a
generating set of $B$ by the hypothesis. Also since for any $b\in B$
and $\phi_b(x)\in S_A$,
$$
(s_a, e_B)=(\phi_b(x),e_B)=(e_A,b)*(x,e_B)*(e_A,b^{-1}),
$$
the element $(s_a, e_B)$ can be expressed as products of a sequence
of elements of $S$. This implies that $S$ generates the group
$A\rtimes_\phi B$.

We now show that $C_{A\rtimes_\phi B}(S)$ is a replacement product
of $C_A(S_A)$ and $C_B(S_B)$. By Remark~\ref{rem5.5}, under
Assumption~\ref{asp5.3}, Cayley graphs $C_A(S_A)$, $C_B(S_B)$ and
$C_{A\rtimes_\phi B}(S)$ are well-defined and undirected, and so
satisfy the requirements in Definition~\ref{Def2.4}.

Let $(y,i)$ and $(z,j)$ be two distinct vertices in
$C_{A\rtimes_\phi B}(S)$, where $y,z\in A=V(C_A(S_A))$ and $i,j\in
B=V(C_B(S_B))$. Since $C_{A\rtimes_\phi B}(S)$ is a Cayley graph, we
have that
 \begin{equation}\label{e5.3}
\begin{array}{rl}
 (y,i)(z,j)\in E(C_{A\rtimes_\phi B}(S))
&\Leftrightarrow (y,i)^{-1}*(z,j)\\
 &=(\phi_{i^{-1}}(y^{-1}),i^{-1})*(z,j)\\
                 &=(\phi_{i^{-1}}(y^{-1})\phi_{i^{-1}}(z),i^{-1}j)\\
                 &=(\phi_{i^{-1}}(y^{-1}z),i^{-1}j)\\
                 &\in S=\{(e_A,b): \ b \in
                 S_B\}\cup\{(x,e_B)\}.\qquad\qquad
                 \end{array}
 \end{equation}

If $(\phi_{i^{-1}}(y^{-1}z),i^{-1}j)\in\{(e_A,b): b\in S_B\}$, then
$y=z$ by (\ref{e5.2}), and $ij\in E(C_B(S_B))$, which means that the
edge $(y,i)(y,j)$ of $C_{A\rtimes_\phi B}(S)$ is an edge in
$C_A(S_A)\circledR C_B(S_B)$.

If $(\phi_{i^{-1}}(y^{-1}z),i^{-1}j)=(x,e_B)$, then $i=j$ and
$\phi_{i^{-1}}(y^{-1}z)=x$. Since
$\phi_{i^{-1}}\phi_{i}=\phi(i^{-1})\phi(i)=\phi(i^{-1}i)=\phi(e_B)$
is the identity automorphism of $A$, we have
$\phi_{i^{-1}}^{-1}=\phi_{i}$. Thus,
$y^{-1}z=\phi_{i^{-1}}^{-1}(x)=\phi_{i}(x)\in x^B=S_A$, that is,
$z=y\phi_i(x)$ and $yz\in E(C_A(S_A))$. Therefore, if we use $e_y^i$
and $e_z^i$ to label the edge $yz\in C_A(S_A)$ for each
$(y,i)(z,i)\in E(C_{A\rtimes_\phi B}(S))$, that is $yz=e_y^i=e_z^i$,
then the edge $(y,i)(z,i)$ of $C_{A\rtimes_\phi B}(S)$ is an edge in
$C_A(S_A)\circledR C_B(S_B)$.

It follows that the structure of $C_{A\rtimes_\phi B}(S)$ satisfies
the requirements of Definition~\ref{Def2.4}, and so
$C_{A\rtimes_\phi B}(S)$ is a replacement product of $C_A(S_A)$ and
$C_B(S_B)$.
\end{proof}

\begin{example}\label{exa5.6}\quad
{\rm Let $A=(\mathbb{Z}_2)^n$ and $B=\mathbb{Z}_n$. Then $e_A=e_0$
and $e_B=0$. Let $S_A=\{e_1,e_2,\dots,e_n\}$, where $e_i$ is defined
in (\ref{e2.1}), and $e_i^{-1}=e_i$ for each $i\in
\{1,2,\ldots,n\}$, and let $S_B=\pm \{s_1,s_2,\dots,s_k\}$. The
Cayley graph $C_{A}(S_A)$ is a hypercube $Q_n$ by
Example~\ref{exa2.3} and the Cayley graph $C_{B}(S_B)$ is a
circulant graph $G(n,\pm S)$ by Example~\ref{exa2.2}. Let $\phi$ be
the action of $B$ on $A$ defined in Example~\ref{exa5.2}. Then $S_A$
is the orbit $e_1^B$ of $e_1\in S_A$ under $\phi$. Let
$S=\{(e_A,s):\ s\in S_B\}\cup\{(e_1,e_B)\}$. Then $S=S^{-1}$. By
Theorem~\ref{thm5.4}, $S$ generates $A\rtimes_\phi B$, and
$C_{A\rtimes_\phi B}(S)$ is a replacement product of $C_A(S_A)$ and
$C_B(S_B)$.

In special, if $S_B=\{1,n-1\}$, then
$S=\{(e_0,1),(e_0,n-1),(e_1,0)\}$. The Cayley graph
$C_{(\mathbb{Z}_2)^n\rtimes_\phi \mathbb{Z}_n}(S)=Q_n\circledR
C_n=CCC_n$. The cube-connected cycle $CCC(3)$, shown on the right
side in Figure~\ref{f4}, is a replacement product of $Q_3$ and
$C_3$, and is the Cayley graph $C_{\mathbb{Z}^3_2\rtimes_\phi
\mathbb{Z}_3}(\{(000,1),(000,2),(100,0)\})$. }
 \end{example}

A graph $G$ is $\kappa$-optimal if $\kappa(G)=\delta(G)$. The
following theorem presents a necessary and sufficient condition for
a Cayley graph $C_{A\rtimes_\phi B}(S)$ to be $\lambda'$-optimal if
$C_A(S_A)$ is $\kappa$-optimal and $C_B(S_B)$ is $\lambda'$-optimal.

\begin{theorem}\label{thm5.7}\quad
Under Assumption~\ref{asp5.3}, let $S=\{(e_A,s):\ s \in S_B\}
\cup\{(x,e_B)\}$ and $S=S^{-1}$. If Cayley graph $C_A(S_A)$ is
$\kappa$-optimal and Cayley graph $C_B(S_B)$ is $\lambda'$-optimal,
then Cayley graph $C_{A\rtimes_\phi B}(S)$ is $\lambda'$-optimal
$\Leftrightarrow |S_A|\geqslant 2|S_B|$.
\end{theorem}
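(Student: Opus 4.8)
The plan is to deduce Theorem~\ref{thm5.7} from the replacement-product results of Section~4, using Theorem~\ref{thm5.4} as the bridge. First I would invoke Theorem~\ref{thm5.4}: under Assumption~\ref{asp5.3} with $S=S^{-1}$, the Cayley graph $G:=C_{A\rtimes_\phi B}(S)$ is precisely the replacement product $G_1\textregistered G_2$, where $G_1=C_A(S_A)$ is $\delta_1$-regular with $\delta_1=|S_A|=|B|$ and $G_2=C_B(S_B)$ is $\delta_2$-regular with $\delta_2=|S_B|$ on $\delta_1$ vertices. Hence $G$ is a genuine replacement product to which the standing conventions and the bounds of Sections~3--4 all apply; in particular $G$ is $(\delta_2+1)$-regular and, being a Cayley graph, vertex-transitive, so $\xi(G)=2\delta_2=2|S_B|$.

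Next I would translate the two optimality hypotheses into the parameter language of Theorem~\ref{thm4.6}. Since $C_A(S_A)$ is $\kappa$-optimal, $\kappa_1=\delta_1=|S_A|$, and Whitney's inequality then forces $\lambda_1=\delta_1=|S_A|$ as well. Since $C_B(S_B)$ is $\lambda'$-optimal and regular, it has order $\delta_1=|V(G_2)|\geqslant 4$ and satisfies $\lambda_2=\delta_2=|S_B|$. The only real verification needed is the numerical side-condition of Theorem~\ref{thm4.6}, namely $\kappa_1\geqslant\lambda_2+1$ (equivalently $\kappa_1\geqslant\lambda_1-\lambda_2+1\geqslant 2$): because a Cayley connection set omits the identity, $|S_B|\leqslant|B|-1$, whence $\lambda_2+1=|S_B|+1\leqslant|B|=|S_A|=\kappa_1$, and also $\lambda_1-\lambda_2+1=|S_A|-|S_B|+1\geqslant 2$.

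With these facts in hand, part~(a) of Theorem~\ref{thm4.6} applies verbatim and yields: $G=G_1\textregistered G_2$ is $\lambda'$-optimal if and only if $\lambda_1\geqslant 2\delta_2$, i.e. if and only if $|S_A|\geqslant 2|S_B|$, which is exactly the asserted equivalence. Alternatively one may quote Corollary~\ref{cor4.5} (valid since $\kappa_1=\lambda_1$ and $G_2$ is $\lambda'$-optimal) to obtain $\lambda'(G)=\min\{\lambda_1,2\delta_2\}=\min\{|S_A|,2|S_B|\}$ and compare with $\xi(G)=2|S_B|$. I do not expect a real obstacle here: once Theorems~\ref{thm5.4} and~\ref{thm4.4}/\ref{thm4.6} are granted the argument is bookkeeping, and the one point requiring attention is confirming the side-condition on $\kappa_1$, which is automatic because $S_B$ generates $B$ and avoids $e_B$.
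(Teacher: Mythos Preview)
Your proposal is correct and follows essentially the same route as the paper: invoke Theorem~\ref{thm5.4} to identify $C_{A\rtimes_\phi B}(S)$ with $C_A(S_A)\textregistered C_B(S_B)$, use $\kappa$-optimality of $C_A(S_A)$ to get $\kappa_1=\lambda_1=|S_A|$, and then apply the $\lambda'$-optimality criterion from Section~4. The paper streamlines the last step by quoting Corollary~\ref{cor4.6}(a) directly (since $\kappa_1=\lambda_1\geqslant 2$ is immediate), which spares your verification of the side-condition $\kappa_1\geqslant\lambda_2+1$; your use of Theorem~\ref{thm4.6} is equally valid and your check via $|S_B|\leqslant |B|-1$ is correct, just slightly more work than needed.
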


\begin{proof}\quad
By Theorem~\ref{thm5.4},  $C_{A\rtimes_\phi B}(S)$ is a replacement
product of $C_A(S_A)$ and $C_B(S_B)$. Since $C_A(S_A)$ is
$\kappa$-optimal,
$\kappa(C_A(S_A))=\lambda(C_A(S_A))=\delta(C_A(S_A))=|S_A|\geqslant
2$. Also since $C_B(S_B)$ is $\lambda'$-optimal, by
Corollary~\ref{cor4.6} (a) $C_{A\rtimes_\phi B}(S)$ is
$\lambda'$-optimal if and only if $|S_A|\geqslant 2|S_B|$.
\end{proof}

\begin{example}\label{exm5.8}\quad
{\rm By Example~\ref{exa5.6}, the cube-connected cycle $CCC_n$
$=Q_n\circledR C_n$ is 3-regular, $\xi(CCC_n)=4$,
$|S_{Q_n}|=n\geqslant 2$ and $|S_{C_n}|=2$.
  $$
  |S_A|=\left\{\begin{array}{ll}
 3<4=2|S_B|\ & {\rm if}\ n=3;\\
 n\geqslant 4=2|S_B|\ & {\rm if}\ n\geqslant 4.\qquad\qquad\quad
 \end{array}\right.
 $$
By Example~\ref{exa4.9} and Theorem~\ref{thm5.7}, $CCC_n$ is
 $$
 \left\{\begin{array}{ll}
 \text{not $\lambda'$-optimal ($\lambda'=\lambda=3<4=\xi$) if
 $n=3$;}\\
 \text{$\lambda'$-optimal (i.e.,
 $\lambda'=4=\xi$) if $n\geqslant 4$.}
 \end{array}\right.
 $$
 }
 \end{example}

\begin{theorem}\label{thm5.9}\quad
Let $A=(\mathbb{Z}_2)^n$ and $B=\mathbb{Z}_n$,
$S_A=\{e_1,e_2,\dots,e_n\}$, where $e_i$ is defined in (\ref{e2.1}),
$S_B=\pm \{s_1,s_2,$ $\dots,s_k\}$ with $k\geqslant 2$ and
$s_k<\frac{n}{2}$, $\phi$ be the action of $B$ on $A$ defined in
Example~\ref{exa5.2}. Let $G=C_{A\rtimes_\phi B}(S)$ with order
$\upsilon(G)$, where $S=\{(e_0,s): s \in S_B\}\cup\{(e_1,0)\}$. If
$\frac n2<|S_B|<n-1$, then $G$ is not $\lambda'$-optimal, and
$$
\lambda(G)<\lambda'(G)=n<\frac{\upsilon(G)}{2}\ \ {\rm for}\
n\geqslant 3,
$$
and $G[X]\cong C_B(S_B)$ for any $\lambda'$-atom $X$ of $G$.
\end{theorem}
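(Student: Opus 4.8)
The plan is to deduce everything from the general results of Sections~3 and~4, applied to the replacement product $G$ of $C_A(S_A)$ and $C_B(S_B)$ produced by Theorem~\ref{thm5.4}. First I would record the parameters. Here $C_A(S_A)$ is the hypercube $Q_n$, so it is $\kappa$-optimal with $\kappa_1=\lambda_1=\delta_1=n$; and $C_B(S_B)$ is the circulant graph $G(n;\pm S)$, which is connected (since $S_B$ generates $\mathbb{Z}_n$) and, by the Li--Li result quoted in Example~\ref{exa2.2}, is $\lambda'$-optimal with $\lambda_2'=4k-2$, whence $\lambda_2=\delta_2=|S_B|=2k$ (the $2k$ elements $\pm s_1,\dots,\pm s_k$ being pairwise distinct because $s_k<\frac n2$). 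The hypothesis $\frac n2<|S_B|<n-1$ is then equivalent to the two inequalities $n<4k$ and $2k+1<n$; in particular $n\geqslant 6$, so $\delta_1\geqslant 4$ and Lemma~\ref{lem4.3}, Theorem~\ref{thm4.4} and Corollaries~\ref{cor3.2} and~\ref{cor4.5} all apply.

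Next I would read off the connectivity numbers. Since $\kappa_1=\lambda_1$ and $C_B(S_B)$ is $\lambda'$-optimal, Corollary~\ref{cor4.5} gives $\lambda'(G)=\min\{\lambda_1,2\delta_2\}=\min\{n,4k\}=n$ because $n<4k$; and since $\xi(G)=2\delta_2=4k>n$, the graph $G$ is not $\lambda'$-optimal (this also follows directly from Theorem~\ref{thm5.7}, as $|S_A|=n<4k=2|S_B|$). Since $\kappa_1\geqslant 2$ and $\lambda_2=\delta_2$, Corollary~\ref{cor3.2}(d) gives $\lambda(G)=\min\{\lambda_1,\delta_2+1\}=\min\{n,2k+1\}=2k+1$ because $2k+1<n$, whence $\lambda(G)=2k+1<n=\lambda'(G)$. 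Finally the order is $\upsilon(G)=n\cdot 2^{n}$ by Proposition~\ref{prop2.5}, so $\lambda'(G)=n<n\cdot 2^{\,n-1}=\frac12\upsilon(G)$ for every $n\geqslant 2$; together these give the displayed chain of inequalities.

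The remaining, and only substantial, task is to identify the $\lambda'$-atoms. First I would observe that each copy $X_i=xG_2$ (with $x\in V(Q_n)$) is itself a $\lambda'$-fragment: $G[X_i]\cong C_B(S_B)$ is connected and $2k$-regular while $G$ is $(2k+1)$-regular, so exactly one edge of $G$ leaves each of the $n$ vertices of $X_i$, giving $|E_G(X_i)|=n=\lambda'(G)$ with $G-E_G(X_i)$ having no isolated vertices, and $G[\overline{X_i}]$ is connected since $Q_n-x$ is; hence every $\lambda'$-atom has at most $n$ vertices. Conversely, let $X$ be a $\lambda'$-fragment with $|X|\leqslant n$, put $F=E_G(X)$ (so $|F|=n$), and consider the partition into copies from Proposition~\ref{prop2.5}. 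If some copy $X_j$ met both $X$ and $\overline X$, then $G[X_j]$ would be disconnected in $G-F$, and Lemma~\ref{lem4.3} would force $|F|\geqslant\min\{\kappa_1+\lambda_2-1,\,2\lambda_2,\,\lambda_2'+2\}=\min\{n+2k-1,\,4k\}>n$ (using $2k\geqslant 4$ and $n<4k$), a contradiction. So every copy lies wholly inside $X$ or inside $\overline X$; then $X$ is a union of copies and $|X|$ is a positive multiple of $n$, which with $|X|\leqslant n$ forces $X=X_j$ for a single $j$ and $G[X]\cong C_B(S_B)$. Combining the two directions shows the $\lambda'$-atoms are precisely the copies $xG_2$, each inducing $C_B(S_B)$, as claimed. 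I expect this last step to be the crux: everything rests on the numerical margins $2k\geqslant 4$ and $n<4k$ that push the Lemma~\ref{lem4.3} bound strictly above $\lambda'(G)=n$, thereby excluding a split copy and collapsing the situation to the trivial ``union of copies'' case.
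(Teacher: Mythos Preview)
Your proposal is correct and tracks the paper's proof closely through the computation of $\lambda(G)$, $\lambda'(G)$, and the non-optimality (same lemmas, same arithmetic; you invoke Corollary~\ref{cor3.2}(d) for $\lambda(G)$ where the paper simply uses vertex-transitivity to get $\lambda(G)=\delta(G)=|S_B|+1$, but this is cosmetic).

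The one genuine difference is in the identification of the $\lambda'$-atoms. After using Lemma~\ref{lem4.3} to exclude split copies (exactly as the paper does), the paper finishes by arguing that if both $X$ and $\overline{X}$ contained at least two copies then the corresponding edge set would be a restricted edge-cut of $Q_n$, whence $|F|\geqslant\lambda'(Q_n)=2n-2>n$, a contradiction; only then does it conclude $X=X_i$. You instead first observe that each copy $xG_2$ is already a $\lambda'$-fragment of size $n$, so any atom has $|X|\leqslant n$, and then the divisibility $n\mid |X|$ forces $X$ to be a single copy. Your route is slightly more elementary in that it avoids quoting $\lambda'(Q_n)=2n-2$ from Example~\ref{exa2.3}; the paper's route, on the other hand, never needs to verify that $G[\overline{X_i}]$ is connected. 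Both are short and valid.
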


\begin{proof}\quad
By Example~\ref{exa2.3} $C_A(S_A)\cong Q_n$, by Example~\ref{exa2.2}
$C_B(S_B)\cong G(n; S_B)$, and by Theorem~\ref{thm5.4} the Cayley
graph $G=C_{A\rtimes_\phi B}(S)$ is a replacement product of $Q_n$
and $G(n; S_B)$. Since $k\geqslant 2$ and $s_k<\frac n2$, $G(n;
S_B)$ is $\lambda'$-optimal by Example~\ref{exa2.2}. Since $Q_n$ is
$\kappa$-optimal and $|S_A|=n<2|S_B|$, $G$ is not $\lambda'$-optimal
by Theorem~\ref{thm5.7}. By Corollary~\ref{cor4.5},
$\lambda'(G)=\min\{n,2|S_B|\}=n$. Since $G$ is vertex-transitive and
 $|S_B|<n-1$, we have that
 $$
 \lambda(G)=\delta(G)=|S|=|S_B|+1<n=\lambda'(G).
 $$
Note that $\upsilon(G)=n\cdot 2^n$ and that $k\geqslant 2$ implies
$n\geqslant 5$. It follows that
$$
\lambda(G)<\lambda'(G)=n=\frac{n\,2^n}{2^n}=\frac{\upsilon(G)}{2^n}<\frac{\upsilon(G)}{2}\
{\rm for}\ n\geqslant 3.
$$


We now show the second conclusion. Let $X$ be a $\lambda'$-atom of
$G$ and $F=E_G(X)$. Then $|X|\leqslant \frac{\upsilon(G)}{2}$ and
$F$ is a $\lambda'$-cut of $G$. We need to prove $G[X]\cong
C_B(S_B)$. We first note that
 \begin{equation}\label{e5.4}
 |F|=\lambda'(G)=n<2|S_B|=4k.
 \end{equation}

Let $\{X_1,X_2,\dots,X_n\}$ be a partition of $V(G)$  satisfied
property in Proposition~\ref{prop2.5}. Then $G[X_i]\cong C_B(S_B)$
for each $i\in I_{2^n}$. If there exists some $j\in I_{2^n}$ such
that $G[X_j]$ is disconnected in $G-F$ then, by Lemma~\ref{lem4.3}
and Example~\ref{exa2.2},
\begin{align*}
|F| & \geqslant \min\{\kappa(C_A(S_A))+\lambda(C_B(S_B))-1, 2\lambda(C_B(S_B)), \lambda'(C_B(S_B))+2\}\\
    & =\min\{n+2k-1,4k\}=4k,
\end{align*}
which contradicts with (\ref{e5.4}). It follows that $G[X_i]$ is
connected in $G-F$, that is, either $X_i\subset X$ or $X_i\subset
\overline{X}$ for each $i\in I_{2^n}$.

If both $X$ and $\overline{X}$ contain at least two sets of
$X_1,X_2,\dots,X_{2^n}$, then, by comparing the structure of $G$
with that of $Q_n$, it is easy to see that the subset of edges in
$Q_n$ corresponding to $F$ is a restricted edge-cut of $Q_n$. Hence,
by Example~\ref{exa2.3},
$$
|F|\geqslant\lambda'(Q_n)=2n-2 > n = \lambda'(G)=|F|,
$$
a contradiction. Namely, $X=X_i$ or $\overline{X}=X_i$ for some
$i\in I_{2^n}$.

Since $|X|\leqslant \frac{\upsilon(G)}{2}$, we have $X=X_i$ and
$\overline{X}=V(G)\setminus X_i$ for some $i\in I_{2^n}$. Thus every
$\lambda'$-cut of $G$ isolates a subgraph which is isomorphic to
$C_B(S_B)$. In other words, $G[X]\cong G[X_i]\cong C_B(S_B)$ for
each $i\in I_{2^n}$.
\end{proof}

\begin{remark}\label{rem5.10}\quad
{\rm We make some remarks on the conditions in Theorem~\ref{thm5.9}.

The condition ``$k\geqslant 2$" is necessary. In fact, if $k=1$,
then $C_B(S_B)$ is a cycle $C_n$. By Example~\ref{exm5.8},
 $$
CCC_n \ {\rm is} \left\{\begin{array}{ll}
 \text{not $\lambda'$-optimal and $\lambda'=\lambda=3$ if
 $n=3$};\qquad\\
 \text{$\lambda'$-optimal if $n\geqslant 4$}.
 \end{array}\right.
 $$

The condition ``$|S_B|>\frac{n}{2}$" is necessary.
Theorem~\ref{thm5.7} means that $C_{A\rtimes_\phi B}(S)$ is
 $$
 \text{not $\lambda'$-optimal $\Leftrightarrow |S_A|<2|S_B|$, i.e.,
$|S_B|>\frac 12\,|S_A|=\frac 12\,n$.}
 $$

The condition ``$|S_B|<n-1$" is also necessary. In fact, if
$|S_B|=n-1$ then $G(n;S_B)$ is a complete graph $K_n$ by
Example~\ref{exa2.2}. Thus, $\lambda(G)=n=\lambda'(G)$, which
contradicts to our conclusion. }
\end{remark}

The following theorem gives a straight answer to
Problem~\ref{prob1.4}.

\begin{theorem}\label{thm5.11}\quad
For a given odd integer $d\,(\geqslant 5)$ and any integer $s$ with
$1\leqslant s\leqslant d-3$, there is a Cayley graph $G$ with degree
$d$ such that $\lambda'(G)=d+s<\frac12\,{\upsilon(G)}$.
\end{theorem}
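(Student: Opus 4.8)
The plan is to deduce this directly from Theorem~\ref{thm5.9} by choosing the parameters so that the resulting Cayley graph has degree $d$ and restricted edge-connectivity exactly $d+s$. Given an odd integer $d\geqslant 5$ and an integer $s$ with $1\leqslant s\leqslant d-3$, I would set $n=d+s$ and $k=\frac{d-1}{2}$; since $d$ is odd, $k$ is a positive integer, and $k\geqslant 2$ because $d\geqslant 5$. Then take $A=(\mathbb{Z}_2)^n$, $B=\mathbb{Z}_n$, $S_A=\{e_1,\dots,e_n\}$ as in (\ref{e2.1}), $S_B=\pm\{1,2,\dots,k\}$ (so that $|S_B|=2k=d-1$ and $s_k=k$), let $\phi$ be the action of $B$ on $A$ from Example~\ref{exa5.2}, and set $S=\{(e_0,t):t\in S_B\}\cup\{(e_1,0)\}$. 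The graph $G=C_{A\rtimes_\phi B}(S)$ will be the desired example.

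First I would verify the hypotheses of Theorem~\ref{thm5.9}. Since $s\geqslant 1$ we have $s_k=\frac{d-1}{2}<\frac{d+s}{2}=\frac n2$, so $k\geqslant 2$ and $s_k<\frac n2$, which by Example~\ref{exa2.2} makes $C_B(S_B)\cong G(n;S_B)$ a $\lambda'$-optimal circulant graph; the same inequality gives $k<\lfloor n/2\rfloor$, so $S_B$ is a legitimate connection set. The remaining requirement $\frac n2<|S_B|<n-1$ reads $\frac{d+s}{2}<d-1<d+s-1$; the left-hand inequality is equivalent to $s\leqslant d-3$ and the right-hand one to $s\geqslant 1$, so both hold by hypothesis. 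Hence all conditions of Theorem~\ref{thm5.9} are satisfied.

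Applying Theorem~\ref{thm5.9} then yields at once that $G=C_{A\rtimes_\phi B}(S)$ is a Cayley graph, that it is not $\lambda'$-optimal, and that $\lambda(G)<\lambda'(G)=n=d+s<\tfrac12\upsilon(G)$ (indeed $\upsilon(G)=n\,2^n$). It remains only to read off the degree: by Theorem~\ref{thm5.4}, $G$ is the replacement product of $Q_n$ and the $|S_B|$-regular graph $G(n;S_B)$, so by Proposition~\ref{prop2.5} it is $(|S_B|+1)$-regular, and $|S_B|+1=(d-1)+1=d$. Thus $G$ is a Cayley graph of degree $d$ with $\lambda'(G)=d+s<\tfrac12\upsilon(G)$, which is exactly what Problem~\ref{prob1.4} asks for.

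I do not expect a real obstacle here; the argument is essentially a parameter count resting on Theorem~\ref{thm5.9}. The only point that needs care is checking that $|S_B|=d-1$, $s_k=k=\frac{d-1}{2}$ and $n=d+s$ can simultaneously meet $s_k<\frac n2$ and $\frac n2<|S_B|<n-1$, and this is precisely where the bounds $1\leqslant s\leqslant d-3$ are used; the assumption that $d$ is odd is what allows $|S_B|=d-1$ to be realised in the form $\pm\{1,\dots,k\}$ with $k$ an integer.
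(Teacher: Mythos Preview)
Your proposal is correct and follows essentially the same route as the paper: you set $n=d+s$, $k=\frac{d-1}{2}$, and apply Theorem~\ref{thm5.9} after checking $k\geqslant 2$, $s_k<\frac n2$, and $\frac n2<|S_B|<n-1$. Your version is in fact a bit more explicit than the paper's---you spell out the choice $S_B=\pm\{1,\dots,k\}$ and verify the degree via Proposition~\ref{prop2.5}---but the argument is the same.
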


\begin{proof}\quad
In Theorem~\ref{thm5.9}, let $n=d+s$ and $k=\frac{d-1}{2}$, then
$|S_B|=d-1$ and $G=C_{\mathbb{Z}^{d+s}_2\rtimes_\phi
\mathbb{Z}_{d+s}}(S)$ is a Cayley graph. Since $1\leqslant
s\leqslant d-3$, we have $\frac{d+s}{2}<|S_B|<d+s-1$. By
Theorem~\ref{thm5.9}, $G$ is not $\lambda'$-optimal, and
$$
\lambda(G)=d<\lambda'(G)=d+s<\frac{(d+s)\cdot
2^{d+s}}{2}=\frac{\upsilon(G)}{2}.
$$
The theorem follows.
\end{proof}

\section{Conclusion}
In this paper, we investigate the restricted edge-connectivity of
replacement product of two graphs. By means of the semidirect
product two groups, we further confirm that under certain
conditions, the replacement product of two Cayley graphs is also a
Cayley graph, and give a necessary and sufficient condition for such
Cayley graphs to have maximum restricted edge-connectivity. Based on
these results, for given odd integer $d$ and integer $s$ with $d
\geqslant 5$ and $1\leqslant s\leqslant d-3$, we construct a Cayley
graph with degree $d$ whose restricted edge-connectivity is equal to
$d+s$, which answers a problem proposed ten years ago.

In the proof of this result, the replacement product of graphs plays
a key role. Thus, further properties of replacement products deserve
further research.

\vskip10pt

{\bf Acknowledgements}  {This work was supported by National Natural
Science Foundation of China (Grant Nos. 61272008, 11571044),
University Natural Science Research Project of Anhui Province (Grant
No. KJ2016A003) and Scientific Research Fund of Anhui University of
Finance \& Economics (Grant No. ACKY1532).}

{\small

}

\end{document}